\let\csname equation*\endcsname\relax
\let\csname endequation*\endcsname\relax
\def\Xint#1{\mathchoice
   {\XXint\displaystyle\textstyle{#1}}%
   {\XXint\textstyle\scriptstyle{#1}}%
   {\XXint\scriptstyle\scriptscriptstyle{#1}}%
   {\XXint\scriptscriptstyle\scriptscriptstyle{#1}}%
   \!\int}
\def\XXint#1#2#3{{\setbox0=\hbox{$#1{#2#3}{\int}$}
     \vcenter{\hbox{$#2#3$}}\kern-.5\wd0}}
\def\dashint{\Xint-}
\newtheorem{theorem}{Theorem}
\newtheorem{lemma}[theorem]{Lemma}
\newcommand{\average}{{\mathchoice {\kern1ex\vcenter{\hrule
height.4pt width 8pt depth0pt}
\kern-11pt} {\kern1ex\vcenter{\hrule height.4pt width 4.3pt
depth0pt} \kern-7pt} {} {} }}
\newcommand{\calA}     {\mathcal{A}}
\newcommand{\calL}     {\mathcal{L}}
\newcommand{\ee}         {\mathbf{e}}
\newcommand{\MM}        {{\bf M}}
\newcommand{\mn}        {{\bf m}}
\newcommand{\N}{\mathbb{N}}
\newcommand{\R}         {\mathbb{R}}
\newcommand{ \rr}        {{\bf r}}
\newcommand{\Sp}        {\mathbb{S}}
\renewcommand{\th}        {\vartheta}
\newcommand{\vol}        {\Omega}
\newcommand{\Z}         {\mathbb{Z}}
\begin{document}

\title[One-dimensional N\'eel walls under applied external fields
]{One-dimensional N\'eel walls under applied external fields}

\author{Milena Chermisi\footnote{Present address: Fresenius Medical
    Care, Via Crema, 8, Palazzo Pignano, Province of Cremona, 26020,
    Italy} and Cyrill B. Muratov}

\address{Department of Mathematical Sciences, New Jersey Institute of
  Technology, Newark, NJ 07102, USA}

\ead{muratov@njit.edu}

\begin{abstract}
  We present a detailed analysis of one-dimensional N\'eel walls in
  thin uniaxial ferromagnetic films in the presence of an in-plane
  applied external field in the direction normal to the easy axis.
  Within the reduced one-dimensional thin film model, we formulate a
  non-local variational problem whose minimizers are given by
  one-dimensional N\'eel wall profiles. We prove existence, uniqueness
  (up to translations and reflections), regularity, strict
  monotonicity and the precise asymptotics of the decay of the
  minimizers in the considered variational problem.
\end{abstract}

\noindent Mathematics Subject Classification: 78A30, 35Q60, 82D40 \\
\vspace{2pc}
\noindent \hspace{-2.5mm} \emph{Keywords}: magnetic domains, thin
films, non-local variational problems

\section{Introduction}

Ferromagnetic materials are at the heart of modern information storage
technology, whose need to keep up with the ever-growing amount of
digital data, currently in excess of $10^{21}$ bytes worldwide, is
readily apparent \cite{eleftheriou10}. This is why these materials
have attracted a huge degree of attention since the early days of the
digital age. The basic principle of magnetic storage relies on the
tendency of the electron spins in ferromagnetic materials to align
along certain preferred directions, giving rise to {\em magnetic
  domains} \cite{hubert}. Registering and manipulating the
magnetization orientation in a given domain is then used to read and
write discrete data encoded by the magnetization orientation in each
domain.

One common magnetic storage solution relies on the use of thin
uniaxial ferromagnetic films in which the magnetization vector prefers
to align along either direction of the easy magnetocrystalline axis in
the film plane
\cite{eleftheriou10,hubert,moser02,zhu08,slaughter09}. When the film
thickness becomes sufficiently small (under a few tens of nanometers),
the magnetization vector is constrained to lie almost entirely in the
film plane. In this situation magnetic domains in epitaxial
(monocrystalline) films usually consist of relatively large regions,
in which the magnetization vector is nearly constant and oriented in
the direction of one of the two possible directions along the easy
axis. These regions are separated by narrow transition regions, called
{\em domain walls}, in which the magnetization vector rapidly rotates
between the two orientations
\cite{hubert,dennis02,oepen91,allenspach94,desimone00}. One of the
most common wall types in such materials is the {\em N\'eel wall},
which separates two regions of opposite magnetization by an in-plane
rotation and is oriented along the easy axis to ensure zero net
magnetic charge. In real materials these walls are often pinned to the
material imperfections, and their motion determines magnetization
reversal under the action of applied magnetic fields \cite{hubert}.

Studies of N\'eel walls have a long and somewhat controversial history
(see the discussions in \cite{hubert,aharoni}), but at present the
structure of the N\'eel wall in very thin films appears to be rather
well understood on the basis of micromagnetic arguments
\cite{hubert,dietze61,collette64,riedel71,desimone00,garcia99,
  trunk01, garcia04,mo:jcp06}. The basic features of the predicted
one-dimensional N\'eel wall profiles had been verified experimentally
in \cite{berger92} (see also \cite{wong96,jubert04}). Rigorous
mathematical studies of the N\'eel walls are more recent and go back
to the work of Garc\'ia-Cervera \cite{garcia99,garcia04}, who
undertook some analysis of the associated one-dimensional variational
problems and performed extensive numerical studies of the energy
functional obtained by Aharoni from the full micromagnetic energy
after restricting the admissible configurations to profiles which
depend only on one spatial variable \cite{aharoni66}.  Melcher further
studied the minimizers of the same functional in the class of
magnetization configurations constrained to the film plane and
established symmetry and monotonicity of the energy minimizing
profiles connecting the two opposite directions of the easy axis
\cite{melcher03}. Using a further one-dimensional thin film reduction
of the micromagnetic energy introduced in \cite{mo:jcp06}, Capella,
Melcher and Otto outlined the proof of uniqueness of the N\'eel wall
profile and its linearized stability with respect to one-dimensional
perturbations \cite{capella07}. Stability of geometrically constrained
one-dimensional N\'eel walls with respect to large two-dimensional
perturbations in soft materials was demonstrated asymptotically in
\cite{desimone06}. More recently, $\Gamma$-convergence studies of the
one-dimensional wall energy in the limit of very soft films and in the
presence of an applied in-plane field normal to the easy axis were
undertaken in \cite{ignat08,ignat09}, and a rigorous derivation of the
effective magnetization dynamics driven by the reduced thin film
energy introduced in \cite{capella07} from the full three-dimensional
Landau-Lifshitz-Gilbert equation was presented in \cite{melcher10}.

In this paper, we perform a detailed variational study of the N\'eel
walls, understood as one-dimensional minimizers of the reduced thin
film micromagnetic energy, in uniaxial materials in the presence of an
applied in-plane magnetic field in the direction perpendicular to the
easy axis, extending previous results for N\'eel walls in the absence
of the applied field. We prove existence, uniqueness (up to
translations and reflections), regularity, strict monotonicity and the
precise decay of the energy minimizing wall profiles. Our variational
setting is slightly different from that adopted in the earlier works
and relies on the angle variable rather than the two-dimensional unit
vector representation of the magnetization. For this reason our proofs
differ in a few technical aspects from those of \cite{melcher03}. In
fact, one of the purposes of our work was to clarify some of the
arguments in the analyses of \cite{garcia04,melcher03,capella07}. In
particular, we spell out the details of the proof of uniqueness of
minimizers within our setting and fill in the missing argument for
proving strict monotonicity of the angle variable as the function of
coordinate, which is needed to establish stability of the N\'eel wall
profile in \cite{capella07}. We also establish the precise asymptotic
behavior of the N\'eel wall profiles at infinity, which is new even in
the case of zero applied field. Let us note that while in this paper
we are not concerned with the logarithmic tail of the N\'eel walls in
very soft materials, which was one of the main focuses of
\cite{garcia04,melcher03,garcia99}, our decay estimates could be made
quantitative in this regime to yield the intermediate asymptotics of
the N\'eel wall profile away from the core.

The rest of our paper is organized as follows. In Sec. \ref{sec:model}
we discuss the basic micromagnetic energy and derive the reduced
one-dimensional energy that describes the N\'eel walls in the applied
in-plane field oriented normally to the easy axis. Then in
Sec. \ref{s:main} we present the variational setting for our analysis
and state our main result. Sec. \ref{s:lems} contains a few auxiliary
results and Sec. \ref{s:proofs} contains the proof of the main
theorem. We also discuss some open problems at the very end of
Sec. \ref{s:proofs}.

\section{Model}
\label{sec:model}

In this paper we are interested in the analysis of the energy
minimizing magnetization configurations in thin uniaxial ferromagnetic
films of large extent with the easy axis in the film plane. We also
wish to include the effect of an applied in-plane field in the
direction normal to the easy axis. The starting point in the studies
of such systems is the energy functional, introduced by Landau and
Lifshitz, which leads to a non-convex, nonlocal variational
problem. The functional, written in the CGS units, is
\cite{landau35,hubert,aharoni,landau8}:
\begin{align}\label{E_M}
  E(\MM) =\frac{A}{2|M_s|^2}\int_\vol |\nabla \MM|^2 \, d^3r
  +\frac{K}{2|M_s|^2}\int_\vol \Phi(\MM) \, d^3r - \int_\vol {\bf
    H}_{\rm ext} \cdot \MM \, d^3r
  \notag \\
  + \frac{1}{2} \int_{\R^3}\int_{\R^3} \frac{\nabla \cdot \MM( \rr)
    \nabla \cdot \MM(\rr')}{| \rr-\rr'|}\, d^3 r \, d^3 r' +
  \frac{M_s^2}{2K} \int_\vol | \mathbf H_\mathrm{ext} |^2 \, d^3r .
\end{align}
Here $\Omega \subset \R^3$ is the domain occupied by the ferromagnetic
material, $\MM:\R^3\to\R^3$ is the magnetization vector that satisfies
$|\MM|=M_s$ in $\vol$ and $\MM=0$ in $\R^3\setminus \vol$, the
positive constants $M_s$, $A$ and $K$ are the material parameters
referred to as the saturation magnetization, exchange constant and the
anisotropy constant, respectively, $\mathbf H_\mathrm{ext}$ is an
applied external field, and $\Phi: \R^3 \to \R$ is a non-negative
potential that has several minima at which $\Phi$ vanishes. Note that
$\nabla \cdot \MM$ in the double integral is understood in the
distributional sense.

The micromagnetic energy in \eqref{E_M} is composed of five terms: the
exchange energy term which penalizes the spatial variations of the
magnetization $\MM$, the anisotropy term reflecting the
magnetocrystalline properties of the material, the Zeeman energy
favoring the alignment of $\MM$ with the applied external field,  the
stray-field energy, which is nonlocal and favors vanishing
distributional divergence, i.e., $\nabla \cdot \MM = 0$ both in
$\Omega$ and on $\partial \Omega$ (the so-called pole-avoidance
principle), and an inessential constant term added for convenience. In
the case of a uniaxial material of interest to us, there exists a
distinguished axis identified through a unit vector $\ee$, and $\Phi$
is given by $\Phi(\MM)=M_s^2- (\MM\cdot\ee)^2$, so that the minima of
$\Phi$ are $\{\pm \ee M_s\}$ \cite{hubert,aharoni,landau8}.

In the case of extended monocrystalline thin films with the in-plane
easy axis we have $\Omega = \R^2 \times (0,d)$, and without loss of
generality we may assume that $\ee = \ee_2$, where $\ee_i$ is the unit
vector in the $i$-th coordinate direction. For thin films (moderately
soft, ultra-thin) of practical interest to magnetic device
applications such as MRAMs (magnetoresistive random access memories)
\cite{dennis02,zhu08,slaughter09}, a significant reduction of the
energy in \eqref{E_M} is possible, giving rise to the {\em reduced
  thin film energy} \cite{desimone00,mo:jcp06}. To better explain the
relevant parameter regime, let us introduce the following quantities
\begin{align}
\ell=\bigg(\frac{A}{4\pi M_s^2}\bigg)^{1/2},
\qquad
L=\bigg(\frac{A}{K}\bigg)^{1/2},
\qquad
Q=\bigg(\frac{\ell}{L}\bigg)^{2},
\end{align}
called the exchange length, the Bloch wall thickness, and the material
quality factor, respectively. When the film is {\it ultra-thin} and
{\it soft}, we have $d \lesssim \ell \lesssim L$, but at the same time
one also has a balance $Ld / \ell^2 \sim 1$ for many materials
\cite{heinrich93}.  In this situation the dimensionless parameter
\begin{align}
\nu= \frac{4\pi M_s^2 d}{KL}
=\frac{Ld}{\ell^2}=\frac{d}{\ell \sqrt Q},
\end{align}
which is referred to as the {\em thin film parameter} \cite{mo:jcp06},
becomes a single measure of the strength of the magnetostatic
interaction relative to both anisotropy and exchange.

The reduced thin film energy is formally obtained from the full
micromagnetic energy in \eqref{E_M} by assuming that $\MM$ does not
vary in the direction of $\ee_3$ (the direction normal to the film),
setting the component of $\MM$ along $\ee_3$ to zero and passing to
the limit $Q \to 0$ and $d \to 0$ jointly, subject to $\nu = O(1)$
fixed, after rescaling lengths with $L$ \cite{mo:jcp06} (see also
\cite{desimone00}). Assuming further that $\mathbf H_\mathrm{ext} =
\mathbf e_1 h K / M_s$, after a suitable rescaling we arrive at the
following reduced energy functional:
\begin{align}
  \label{Em}
  E(\mn) = \frac12 \int_{\R^2} |\nabla \mn|^2 \, d^2 r & + \frac12
  \int_{\R^2} (\mn \cdot \mathbf e_1 - h)^2 \, d^2 r \notag \\
  & + {\nu \over 8 \pi} \int_{\R^2} \int_{\R^2} {\nabla \cdot \mn(\rr)
    \nabla \cdot \mn(\rr') \over | \rr - \rr'|} d^2 r \, d^2 r',
\end{align}
where now $\mn : \R^2 \to \Sp^1$ is the unit vector in the direction
of the magnetization in the film plane. Note that the assumptions on
$\MM$ used in this derivation are justified by the strong penalization
of the variations of $\MM$ across the normal direction to the film by
the exchange energy and by the strong penalization of the normal
component of $\MM$ by the shape anisotropy \cite{hubert}. Also, up to
a constant factor the last term in \eqref{Em} is simply the square of
the homogeneous $H^{-1/2}$-norm of $\nabla \cdot \mn$ in $\R^2$
\cite{desimone00}.

The reduced energy in \eqref{Em} is the starting point of the analysis
of the rest of our paper. Without loss of generality we may assume
that $h \geq 0$. Also note that for $h \geq 1$ the energy in
\eqref{Em} admits a unique global minimum $\mn = \mathbf e_1$ and no
N\'eel walls are, therefore, possible in this situation. For $h \in
[0, 1)$, on the other hand, there are two global minimizers $\mn_\pm =
(h, \pm \sqrt{1 - h^2})$ corresponding to the two monodomain
states. In the following, we will always assume that $h$ is in this
non-trivial range, in which N\'eel walls connecting the two states
appear. Let us point out that at the same time we do not allow the
external field to have a component in the direction of the easy axis,
since in this case only one monodomain state exists as the global
minimizer of the energy. Under an applied field in the direction of
the easy axis N\'eel walls begin to move, invading the domain with
higher energy density by the domain with the lower energy density
\cite{hubert,capella07}. Similarly, the considered wall orientation
along the easy axis is the only one that makes the stray field energy
of a one-dimensional profiles finite. When the wall makes a non-zero
angle with the easy axis (compare with \cite{mo:jap08}), it carries a
net magnetic charge, which makes the associated magnetostatic
potential for the wall in the whole of $\R^2$ infinite.

\section{Variational formulation and statement of the main result} 
\label{s:main}

We now turn to the study of one-dimensional N\'eel wall profiles. For
that we assume that $\mn$ varies only along $\ee_1$ and compute the
energy of such a configuration per unit length of the wall. It is
convenient to introduce the new variable $\vartheta = \vartheta(x)$
which gives the angle that the vector $\mn$ makes with $\mathbf e_2$
in the counter-clockwise direction as a function of the coordinate
along $\mathbf e_1$.  Thus, setting
\begin{align}
\text{$\mn(x)=(-\sin\th(x), \cos \th(x))\in \Sp^1$}
\end{align}
for every $x\in \R$, we can rewrite the one-dimensional version of the
functional in \eqref{Em} in terms of the angle variable $\th$ to
obtain the one-dimensional N\'eel wall energy (cf. \cite{mo:jcp06}):
\begin{equation}\begin{split}\label{1de}
    E (\th;\R) & :=\frac{1}{2}\int_\R \bigg\{ |\th_x|^2 + (\sin \th
    -h)^2 +\frac{\nu}{2} \sin \th \bigg(-\frac{d^2}{d
      x^2}\bigg)^{1/2}\sin\th \bigg\} \, dx \\
    &= \frac{1}{2}\int_\R \bigg( |\th_x|^2 +( \sin\th-h )^2 \bigg) dx
    + \frac{\nu}{8\pi} \int_\R\int_\R \frac{\big( \sin \th(x)- \sin
      \th(y)\big)^2}{ (x-y)^2}\, dx \, dy,
\end{split}\end{equation}
where, as usual, $\left( -{d^2 / dx^2} \right)^{1/2}$ denotes the
square root of the one-dimensional negative Laplacian (a linear
operator whose Fourier symbol is $|k|$), and, furthermore, we used the
identity \cite{dinezza12,mo:jap08}
\begin{equation}\label{12lap}
  \left( -{d^2 \over dx^2} \right)^{1/2} u(x)=\frac{1}{\pi}
  \dashint_{\R} \frac{u(x)-u(y)}{(x-y)^{2}}\, dy, 
\end{equation}
for every $x$ and, say, every $u\in C^\infty_c(\R)$, where $\dashint$
stands for the principal value of the integral.

We wish to study the minimizers of the energy in \eqref{1de} among the
profiles that connect the two distinct minima of the energy at $x =
\pm \infty$. To this end, we need to introduce a suitable admissible
class of functions which yields minimizers with the desired
properties. We propose to minimize $E(\th; \R)$ over the admissible
class
\begin{equation}\label{min1D}
  \calA:=\{\th\in H^1_{\rm loc}(\R): \, \th -\eta_h\in H^1(\R)\},
\end{equation}
where $\eta_h \in C^\infty(\R; [0, \pi])$ is a fixed non-increasing
function such that, setting
\begin{align}
  \theta_h:= \arcsin h \in [0, \tfrac{\pi}{2}),
\end{align} 
we have $\eta_h=\pi-\theta_h$ in $(-\infty, -1)$ and $\eta_h=\theta_h$
in $(1,+\infty)$.  We point out that the definition of $\calA$ does
not depend on the choice of $\eta_h$: If $\tilde\eta_h \in
C^\infty(\R; [0, 1])$ is a different non-increasing function, i.e.,
$\tilde\eta_h\neq \eta_h$, satisfying $\tilde\eta_h=\pi-\theta_h$ in
$(-\infty, -1)$ and $\tilde\eta_h=\theta_h$ in $(1,+\infty)$, then
\begin{align}
\calA=\{\tilde\th\in H^1_{\rm loc}(\R): \, \tilde\th -\tilde\eta_h\in H^1(\R)\}.
\end{align} 
Indeed, any $\th\in \calA$ satisfies also $\th-\tilde\eta_h
\in H^1(\R)$ for $\tilde\eta_h-\eta_h\in H^1(\R)$; vice versa, for the
same reason any $\tilde\th\in H^1_{\rm loc}(\R)$ with $\tilde\th
-\tilde\eta_h\in H^1(\R)$ belongs to $\calA$. Note that our choice of
the admissible class $\calA$ fixes the rotation sense of the N\'eel
wall, and the wall of the opposite rotation sense may be obtained from
the minimizer over $\calA$ by a reflection about $x = 0$.

It is easy to see that the Euler-Lagrange equation associated with the
functional in \eqref{1de} is given by
\begin{equation}\label{1del}
  -\th_{xx} + \cos\th \sin \th 
  - h\cos\th
  +\frac{\nu}{2} \cos\th \bigg(-\frac{d^2}{d x^2}\bigg)^{1/2} \sin\th 
  =0,
\end{equation}
with the boundary conditions at infinity
 \begin{equation}\label{limcon}
   \lim_{x\to +\infty } \th(x)=\theta_h, \qquad 
   \lim_{x\to -\infty}\th(x)=\pi-\theta_h.
 \end{equation}
The main result of this paper is the following.

\begin{theorem}[{\bf existence, uniqueness, regularity, strict
    monotonicity and decay of N\'eel walls}]\label{t-one} 
  For every $\nu > 0$ and every $h \in [0, 1)$ there exists a
  minimizer of $E(\th; \R)$ in \eqref{1de} over $\calA$ in
  \eqref{min1D}, which is unique (up to translations), strictly
  decreasing with range equal to $(\theta_h, \pi-\theta_h)$ and is a
  smooth solution of \eqref{1del} that satisfies the limit conditions
  given in \eqref{limcon}.  Moreover, if $\th^{(0)}\colon \R\to
  (\theta_h, \pi-\theta_h)$ is the minimizer of $E$ in the class
  $\calA$ satisfying $\th^{(0)}(0)= \tfrac{\pi}{2}$, then
  $\th^{(0)}(x) = \pi - \th^{(0)}(-x)$, and there exists a constant $c
  > 0$ such that $\lim_{x \to +\infty} x^2 (\th^{(0)}(x) - \theta_h) =
  c$.
\end{theorem}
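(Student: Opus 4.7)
The plan is to prove the theorem in three stages: the direct method combined with a rearrangement step for existence, range, and monotonicity; a nonlocal strong maximum principle together with a sliding argument for strict monotonicity and uniqueness; and Fourier analysis of the linearized operator's Green's function for the sharp algebraic decay. The principal obstacles will be the nonlocal strong maximum principle itself---since the relevant integrodifferential operator combines a local second-order part with a long-range nonlocal piece and a variable-sign coefficient $\cos\th$---and the bootstrap to the sharp $x^{-2}$ rate in the decay argument, where the Green's function itself decays only polynomially.

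\emph{Existence, range, regularity, monotonicity.} For a minimizing sequence $\{\th_n\}\subset\calA$, I would first truncate each $\th_n$ pointwise to $[\theta_h,\pi-\theta_h]$, which is admissible and does not increase any of the three pieces of $E(\th;\R)$ in \eqref{1de} (since $\eta_h$ and the asymptotic limits already lie in this interval). I would then apply a monotone nonincreasing rearrangement of $\th_n$ about the midpoint value $\pi/2$: the Dirichlet energy drops by a 1D P\'olya--Szeg\H{o} inequality, the potential $(\sin\th-h)^2$ is preserved by equimeasurability, and the Gagliardo nonlocal term drops by a fractional rearrangement inequality in the spirit of \cite{melcher03}. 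Weak compactness of $\th_n-\eta_h$ in $H^1(\R)$ and lower semicontinuity of each piece of \eqref{1de} (Fatou suffices for the double integral) produce a nonincreasing minimizer $\th\in\calA$; the limits \eqref{limcon} follow from monotonicity together with the $H^1$ tail control. Smoothness is a standard bootstrap in \eqref{1del}. The values $\theta_h$ and $\pi-\theta_h$ are then excluded from the range by applying a nonlocal strong maximum principle to the nonnegative quantities $\th-\theta_h$ and $\pi-\theta_h-\th$.

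\emph{Strict monotonicity, uniqueness, symmetry.} Formally differentiating \eqref{1del} in $x$ yields a linear integrodifferential equation for $w:=\th_x\leq 0$; the nonlocal strong maximum principle upgrades this to $w<0$ on $\R$. For uniqueness modulo translations, given two minimizers $\th^{(1)},\th^{(2)}\in\calA$, I would invoke a sliding argument: set $\th^{(2)}_t(x):=\th^{(2)}(x-t)$ and take $t_\star:=\inf\{t\in\R:\th^{(2)}_t\geq\th^{(1)}\text{ on }\R\}$, which is finite by the boundary limits. At $t_\star$ the nonnegative difference $\th^{(2)}_{t_\star}-\th^{(1)}$ touches zero somewhere on $\R$; since it satisfies the associated linear integrodifferential equation, the strong maximum principle forces $\th^{(2)}_{t_\star}\equiv\th^{(1)}$. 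Reflection symmetry $\th^{(0)}(-x)=\pi-\th^{(0)}(x)$ then follows from the invariance of $E$ under the transformation $\th(x)\mapsto\pi-\th(-x)$, together with uniqueness up to translation and the normalization $\th^{(0)}(0)=\pi/2$ (attainable because $\th^{(0)}$ is strictly decreasing and crosses $\pi/2$ exactly once).

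\emph{Sharp $x^{-2}$ decay.} Set $\psi:=\th^{(0)}-\theta_h\to 0$ as $x\to+\infty$. From \eqref{1del} I obtain $\mathcal{L}\psi=R(\psi)$, where
\[
  \mathcal{L}\psi := -\psi_{xx} + (1-h^2)\psi + \tfrac{\nu(1-h^2)}{2}\left(-\tfrac{d^2}{dx^2}\right)^{1/2}\psi,
\]
and $R(\psi)=O(\psi^2)$ with analogous nonlocal correction terms. The fundamental solution $G$ of $\mathcal{L}$ has Fourier transform $\hat G(k)=(k^2+(1-h^2)+\tfrac{\nu(1-h^2)}{2}|k|)^{-1}$; after subtracting its smooth part near $k=0$, the only low-frequency non-smoothness is the cusp $-\nu|k|/(2(1-h^2))$, whose distributional inverse Fourier transform produces an $x^{-2}$ kernel. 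Thus $G(x)=\nu/(2\pi(1-h^2)x^2)+O(|x|^{-3})$ as $|x|\to\infty$. Starting from a preliminary polynomial decay bound on $\psi$ (from $\psi\in H^1(\R)$ and the structure of \eqref{1del}) and bootstrapping via the convolution representation $\psi=G\ast R(\psi)$, split at $|y|\leq x/2$ versus $|y|>x/2$, I extract
\[
  \lim_{x\to+\infty} x^2 \psi(x) = \frac{\nu}{2\pi(1-h^2)} \int_{\R} R(\psi(y))\,dy =: c,
\]
with $c>0$ from the definite sign of the leading nonlinearity near the asymptotic state.
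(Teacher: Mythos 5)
Your first and third stages track the paper closely: existence via truncation to $[\theta_h,\pi-\theta_h]$ plus a monotone/symmetric-decreasing rearrangement (the paper's Lemmas \ref{p-uno} and \ref{l-due}, which work with the folded variable $\rho=\min(\th,\pi-\th)$ precisely because $\sin$ is not monotone on the full range), and decay via the Green's function of the linearization, whose $|k|$-cusp at $k=0$ gives the $|x|^{-2}$ tail and the limit $c$ proportional to $\int R$, exactly as in the paper's Step~5 and Lemma \ref{l:G}. The genuine gap is in your middle stage. There is no off-the-shelf ``nonlocal strong maximum principle'' for \eqref{1del}: the nonlocal term is $\tfrac{\nu}{2}\cos\th\,(-d^2/dx^2)^{1/2}\sin\th$, where the weight $\cos\th$ changes sign across the wall and $\sin$ is not monotone on $(\theta_h,\pi-\theta_h)$. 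Consequently, an ordering $\th^{(2)}(\cdot-t_\star)\ge\th^{(1)}$ gives no sign for $\sin\th^{(2)}(\cdot-t_\star)-\sin\th^{(1)}$, hence no sign for the nonlocal contribution in the equation for the difference at a contact point, and the sliding argument cannot be closed; likewise, for $w=\th_x\le 0$ the linearized nonlocal term is $\cos\th\,(-d^2/dx^2)^{1/2}(w\cos\th)$ and $w\cos\th$ has no global sign, so $w<0$ does not follow from a touching-point argument alone. The paper circumvents both obstacles differently: strict monotonicity comes from computing $\th^{(0)}_{xxx}(\bar x)$ at a putative interior critical point and exploiting the \emph{odd symmetry} of $\th^{(0)}_x\cos\th^{(0)}$ so that the $y$ and $-y$ contributions to the principal-value integral combine with a definite sign; uniqueness is proved variationally, via the $\arcsin$-midpoint competitor and the convexity inequality \eqref{exchange_p}, using that the anisotropy and stray-field terms are strictly convex quadratics in $u=\sin\th$. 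Note also that even if a comparison principle were available, initializing the sliding is problematic here: both minimizers approach $\theta_h$ only algebraically (at the same $x^{-2}$ rate), so finiteness of your $t_\star$ is not ``clear from the boundary limits.''

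Two smaller points in your decay stage. First, $R(\psi)=O(\psi^2)$ ``with analogous nonlocal correction terms'' is not automatic: pointwise decay does not pass through $(-d^2/dx^2)^{1/2}$, and one needs the quantitative transfer of decay to $(-d^2/dx^2)^{1/2}u$, $u_x$, $u_{xx}$ provided by the paper's Lemma \ref{l:udu} (re-using \eqref{eq:D12uu}); this transfer also saturates at the rate $|x|^{-2}$, which is what stops the bootstrap at the correct exponent. Second, your justification of $c>0$ (``definite sign of the leading nonlinearity'') is not correct as stated: the remainder $f=f_1+f_2+f_3$ is \emph{not} pointwise nonnegative (e.g.\ $f_2$ is a half-Laplacian of a signed quantity), so positivity must be argued at the level of $\int f$; the paper shows $\int f_2=0$ (self-adjointness of $(-d^2/dx^2)^{1/2}$ and constants in its kernel) and $\int f_1>0$, $\int f_3>0$, the last by symmetrizing the double integral and a trigonometric identity. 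Without such an argument the limit in the theorem could a priori be zero.
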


\section{Some auxiliary lemmas}
\label{s:lems}

We start with a few preliminary considerations and lemmas. Let
$\th\in\calA$.  By Morrey's Theorem (see
\cite[Theorem~11.34]{Leoni_book}), $\th -\eta_h\in C^{1/2}(\R)$ and
$\th -\eta_h\to 0$ as $x\to\pm\infty$; that is, $\th\in C(\R)\cap
L^\infty(\R)$ and satisfies \eqref{limcon}. Furthermore, assuming in
addition that $\th(\R)\subset [\theta_h,\pi-\theta_h]$ and
$E(\th,\R)<+\infty$, and defining $\rho \colon \R\to [\theta_h, {\pi
  \over 2}]$ by
\begin{equation}\label{rho}
  \rho(x):=\left\{\begin{array}{lll}
      \th(x) &\qquad &\text{if } \th(x)\in [\theta_h, \frac{\pi}{2}], 
      \\
      \pi-\th(x)&\qquad &\text{if } \th(x)\in (\frac{\pi}{2}, \pi-\theta_h],
    \end{array}\right.
\end{equation}
for every $x\in \R$, we have $\sin \rho=\sin\th$, and since the map
$\th \mapsto \rho$ is Lipschitz, we also have $|\th_x|=|\rho_x|$
almost everywhere on $\R$.  Thus
\begin{align}\label{coest}
  +\infty> E( \th, \R) &= E(\rho, \R) = \frac{1}{2}\int_\R |\rho_x|^2
  \, dx + \frac{1}{2}\int_\R (\sin \rho-h )^2 \, dx +\frac{\nu}{4}
  \|\sin\rho\|_{\mathring{H}^{1/2}(\R)}^2
  \notag \\
  &\geq \frac{1}{2}\int_\R |\rho_x|^2 \, dx + \frac{c_h}{4}\int_\R
  \big(\rho-\theta_h \big)^2 \, dx
  +\frac{\nu}{4} \|\sin\rho\|_{\mathring{H}^{1/2}(\R)}^2 \notag \\
  &\geq \frac{c_h}{4} \big\|\rho-\theta_h \big\|_{H_1}^2
  +\frac{\nu}{4} \|\sin\rho\|_{\mathring{H}^{1/2}(\R)}^2,
\end{align}
where $c_h:=\cos^2 \big( \frac{\pi}{4}+ \frac{\theta_h}{2}\big) > 0$
for all $\theta_h <\frac{\pi}{2}$.  In the above inequality, we used
the fact that, since $\rho(\R)\subset [\theta_h, \frac{\pi}{2}]$,
$\theta_h \in [0, \frac{\pi}{2})$, and $\sin z\geq z/\sqrt2$ for all
$z\in [0, \frac{\pi}{4}]$, we have
\begin{align}
  \sin \rho-h =\sin\rho- \sin \theta_h = 2 \cos \bigg(
  \frac{\rho+\theta_h}{2}\bigg) \sin\bigg(
  \frac{\rho-\theta_h}{2}\bigg) \geq \frac{\sqrt{c_h}}{\sqrt 2}
  (\rho-\theta_h).
\end{align}

\begin{lemma}[{\bf restriction of rotations}]\label{p-uno}
  Let $\th\in \calA$ such that $E(\th)<+\infty$. Then there exists
  $\tilde\th\in \calA$ such that $\tilde\th(\R) \subset
  [\theta_h,\pi-\theta_h]$ and $E(\tilde\th)\leq E( \th)$, with strict
  inequality unless $\th(\R)\subset [\theta_h, \pi-\theta_h]$.
\end{lemma}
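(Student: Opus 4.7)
The plan is to construct $\tilde\th$ from $\th$ by two successive retractions: first fold $\th$ into $[0, \pi]$ via $\th \mapsto \arccos(\cos\th)$, and then truncate the result into $[\theta_h, \pi-\theta_h]$. The main obstacle is the nonlocal term, since a one-step truncation of $\th$ directly onto $[\theta_h, \pi-\theta_h]$ does not in general decrease the Gagliardo integrand: if for instance $\th(x) \in [\theta_h, \pi-\theta_h]$ while $\th(y) \in [\theta_h + 2\pi, \pi-\theta_h + 2\pi]$ with $\sin\th(x) = \sin\th(y)$, the original pointwise contribution vanishes but the truncated one does not. The preliminary fold $\th \mapsto \arccos(\cos\th)$ is exactly what is needed to make $\sin\tilde\th$ a $1$-Lipschitz function of the folded sine, restoring control of the nonlocal term.

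For the first step, set $\th^{(1)}(x) := \arccos(\cos\th(x)) \in [0, \pi]$. Since $\arccos\circ\cos\colon \R \to [0,\pi]$ is $1$-Lipschitz and fixes every point of $[0,\pi]$, and in particular every value of $\eta_h$, one has $|\th^{(1)} - \eta_h| \le |\th - \eta_h|$ and $|\th^{(1)}_x| = |\th_x|$ almost everywhere, from which $\th^{(1)} \in \calA$ follows readily. The identities $\cos\th^{(1)} = \cos\th$ and $\sin\th^{(1)} = |\sin\th|$ then yield: the exchange term is preserved; the anisotropy term decreases because $h \ge 0$ gives $(|\sin\th|-h)^2 \le (\sin\th - h)^2$ pointwise; and the Gagliardo integrand decreases pointwise by the reverse triangle inequality $\bigl||\sin\th(x)| - |\sin\th(y)|\bigr| \le |\sin\th(x) - \sin\th(y)|$.

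For the second step, with $\th^{(1)}(\R) \subset [0, \pi]$, set $\tilde\th := \max(\theta_h, \min(\th^{(1)}, \pi - \theta_h))$. Admissibility and the inclusion $\tilde\th(\R) \subset [\theta_h, \pi - \theta_h]$ are immediate. By the chain rule $\tilde\th_x$ vanishes where the truncation is active, so $\int |\tilde\th_x|^2 \le \int |\th^{(1)}_x|^2$. On the discarded set $\{\th^{(1)} \in [0, \theta_h) \cup (\pi-\theta_h, \pi]\}$ one has $\sin\th^{(1)} \in [0, h)$ and $\sin\tilde\th = h$, so the anisotropy integrand drops to $0$ there. For the nonlocal term, the crucial observation is the pointwise identity $\sin\tilde\th = \max(\sin\th^{(1)}, h)$ valid on $[0, \pi]$; since $s \mapsto \max(s, h)$ is $1$-Lipschitz, the Gagliardo integrand decreases pointwise.

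For the strict inequality assertion, assuming $\th(\R) \not\subset [\theta_h, \pi-\theta_h]$, I would argue by cases. If $\th(\R) \not\subset [0, \pi]$, then by continuity and the boundary conditions $\sin\th$ takes values of both signs on sets of positive measure, so the reverse triangle inequality in step one is strict on a set of positive product measure, yielding strict decrease of the Gagliardo term. Otherwise $\th(\R) \subset [0, \pi]$ but $\th(\R) \not\subset [\theta_h, \pi-\theta_h]$, so the set $\{\sin\th^{(1)} < h\}$ has positive measure, producing strict decrease of the anisotropy term in step two.
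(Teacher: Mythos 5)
Your proof is correct and follows the same two-step reduction as the paper: first fold $\th$ into $[0,\pi]$, then clip into $[\theta_h,\pi-\theta_h]$. Your folding map $\arccos(\cos\th)$ is exactly the paper's piecewise translation/reflection map $\th^\tau$, with the same consequences ($\sin\th^{(1)}=|\sin\th|$, exchange term preserved, anisotropy decreased since $h\ge 0$, Gagliardo term decreased by the reverse triangle inequality). The genuine difference is in the nonlocal term of the clipping step: the paper expands $\|\sin\th\|_{\mathring{H}^{1/2}}^2-\|\sin\tilde\th\|_{\mathring{H}^{1/2}}^2$ by splitting the double integral over $I$ and $\R\setminus I$ and using $A^2-B^2=(A-B)(A+B)$ together with $\sin\th(x)<h\le\sin\th(y)$, whereas you note the identity $\sin\tilde\th=\max(\sin\th^{(1)},h)$ on $[0,\pi]$ and invoke the $1$-Lipschitz property of $s\mapsto\max(s,h)$, which gives the pointwise decrease of the Gagliardo integrand in one line --- a cleaner route to the same estimate. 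Your strictness analysis is also organized differently and is in fact slightly more complete: the paper extracts strictness only from the clipping step (positive contribution when $I\ne\emptyset$), leaving implicit the case where $\th$ exits $[0,\pi]$ but its fold already lands in $[\theta_h,\pi-\theta_h]$, while your Case 1 (a sign change of $\sin\th$ on sets of positive measure forces strict decrease already in the folding step) covers exactly that situation. The only point worth one more line in your write-up is the claim in Case 1 that $\sin\th$ takes both signs on sets of positive measure: it follows from the limits \eqref{limcon} and an intermediate-value argument showing that the preimages of suitable open intervals of angles are nonempty open sets, but as stated it is a little terse.
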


\begin{proof} {\bf Step~1}. We show first that there exists
  $\th^\tau\in \calA$ such that $\th^\tau(\R) \subset [0,\pi]$, $\sin
  \th^\tau=|\sin\th|$, and $E(\th^\tau)\leq E( \th)$.  Let
  $\th^\tau\colon \R\to [0, \pi]$ be defined by
\begin{equation}
\th^\tau(x):=\left\{\begin{array}{lll}
\th(x) -2k\pi&\qquad &\text{if } \th(x)\in [2k\pi, (2k+1)\pi)
\\
2k\pi-\th(x)&\qquad &\text{if } \th(x)\in [(2k-1)\pi, 2k\pi),
\end{array}\right.
\end{equation}
for every $x\in \R$.  The definition is well-posed since $\{[2k\pi,
(2k+1)\pi), [(2k-1)\pi, 2k\pi): k\in \Z\}$ is a partition of $\R$.
Notice that $\th^\tau$ is obtained by means of a translation by
$-2k\pi$ in each interval of the form $[2k\pi, (2k+1)\pi)$, and by
means of a reflection with respect to the origin and a translation by
$2k\pi$ in each interval of the form $[(2k-1)\pi, 2k\pi)$.  By
construction, $\th^\tau\in [0,\pi]$ and $\sin\th\leq |\sin\th| = \sin
\th^\tau$ so that $\|\sin\th\|_{L^2(\R)}=\|\sin\th^\tau\|_{L^2(\R)}$
and $-h \int_\R \sin \th^\tau dx \leq -h\int_\R\sin\th dx$, implying
that $\int_\R (\sin \th^\tau -h)^2 dx \leq \int_\R (\sin\th -h)^2
dx$. Furthermore, since the map $\th \mapsto \th^\tau$ is Lipschitz,
we also have
$\int_\R \th_x^2 dx =\int_\R\big(\th^\tau_x\big)^2 dx$. 

The conclusion now comes from the fact that
\begin{align}
  \int_\R u \bigg(-\frac{d^2}{d x^2}\bigg)^{1/2}u \, dx =
  \frac{1}{2\pi} \int_\R\int_\R \frac{\big( u(x)-u(y)\big)^2}{(x-y)^2}
  \, dx \, dy \hspace{2.5cm}
  \notag \\
  \geq \frac{1}{2\pi} \int_\R\int_\R \frac{\big(
    |u(x)|-|u(y)|\big)^2}{(x-y)^2} \, dx \, dy = \int_\R |u|
  \bigg(-\frac{d^2}{d x^2}\bigg)^{1/2}|u| \, dx.
\end{align}
 
\noindent {\bf Step~2}. Without loss of generality, in view of Step~1,
we can assume that $\th\in \calA$ is such that $\th(\R) \subset
[0,\pi]$ and $E(\th)<+\infty$.  Set $I:=\{ x\in \R :\, \th(x)\in [0,
\theta_h)\cup (\pi- \theta_h, \pi]\}$ and let $\tilde\th\colon \R\to
[\theta_h, \pi-\theta_h]$ be defined by
\begin{equation}
\tilde\th(x):=\left\{\begin{array}{lll}
\theta_h&\qquad &\text{if } \th(x)<\theta_h
\\
\th(x)&\qquad &\text{if } \th(x)\in [\theta_h, \pi -\theta_h]=\R\setminus I,
\\
\pi -\theta_h&\qquad &\text{if } \th(x)>\pi -\theta_h
\end{array}\right.
\end{equation}
for every $x\in \R$.
 
Notice that, since $\th=\tilde\th$ in $\R\setminus I$, we have
\begin{align}
  \| \sin \th-h\|_{L^2 (\R) }^2- \| \sin \tilde\th-h\|_{L^2 (\R) }^2 =
  \| \sin \th-h\|_{L^2 (I) }^2\geq 0.
\end{align}
Moreover, since $\tilde\th$ is constant on $I$, we have
\begin{align}
  \| \sin \tilde\th\|_{\mathring{H}^{1/2} (\R) }^2 = \frac{1}{2 \pi}
  \int_{\R\setminus I} \int_{\R\setminus I} \frac{
    \big(\sin\th(x)-\sin \th(y)\big)^2 }{(x-y)^2} \, dx \, dy \notag
  \\
  + \frac{1}{\pi} \int_{\R\setminus I} \int_{I} \frac{ \big(h-\sin
    \th(y)\big)^2 }{(x-y)^2} \, dx \, dy ,
\end{align}
and so
\begin{align}
  \| \sin \th\|_{\mathring{H}^{1/2} (\R) }^2- \| \sin
  \tilde\th\|_{\mathring{H}^{1/2} (\R) }^2 = \frac{1}{2 \pi} \int_I \int_I
  \frac{ \big(\sin\th(x)-\sin \th(y)\big)^2 }{(x-y)^2} \, dx \, dy
  \notag \\
  + \frac{1}{\pi}  \int_{\R\setminus I} \int_I \frac{ \big(\sin\th(x)-\sin
    \th(y)\big)^2 - \big(h -\sin \th(y)\big)^2 }{(x-y)^2} \, dx \, dy
  \\
  \geq \frac{1}{\pi}  \int_{\R\setminus I} \int_I \frac{ \big(\sin\th(x)-h\big)
    \big(h + \sin\th(x)-2\sin \th(y)\big) }{(x-y)^2} \, dx \, dy >0,
  \notag
\end{align}
where in the second to last inequality we have used the identity
$A^2-B^2=(A-B)(A+B)$ and in the last one the fact that for every $x\in
I$ and every $y\in \R\setminus I$ the following inequalities hold:
$\sin\th(x)< h \leq \sin\th(y)$.  Then we conclude that
$E(\tilde\th)<E(\th)$.
\end{proof}

The following rearrangement property is a consequence of Lemma~7.17 in
\cite{lieb-loss}.
\begin{lemma}\label{l-uno}
  Let $u \in L^2(\R)$ be a nonnegative function and let $u^*$ be its
  symmetric decreasing rearrangement, i.e.,
  \begin{align}u^*(x):= \int_0^{+\infty} \chi_{\{ u>t\}^* }(x) \, dt,
\end{align}
where for a Borel set $A\subset \R$ the rearranged set $A^*$ is the
interval with measure $\calL^1(A)$ centered at the origin. Then
\begin{align}
  \|u^*\|_{\mathring{H}^{1/2} (\R) }^2 = \int_\R
  u^*\bigg(-\frac{d^2}{d x^2}\bigg)^{1/2} u^* \, dx \leq \int_\R u
  \bigg(-\frac{d^2}{d x^2}\bigg)^{1/2} u \, dx 
  =\|u\|_{\mathring{H}^{1/2} (\R) }^2,
\end{align}
with equality 
only if $u$ is a translation of a symmetric decreasing function.
\end{lemma}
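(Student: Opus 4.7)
The plan is to exploit the Gagliardo representation of the $\mathring{H}^{1/2}(\R)$ seminorm and reduce the claim, after a simple regularization, to the classical Riesz rearrangement inequality. By identity \eqref{12lap}, applied symmetrically in $x$ and $y$ and combined with an integration by parts, one has
\begin{align*}
\int_\R v\bigg(-\frac{d^2}{dx^2}\bigg)^{1/2} v \, dx = \frac{1}{2\pi}\int_\R\int_\R \frac{(v(x)-v(y))^2}{(x-y)^2}\, dx\, dy
\end{align*}
for all $v$ in the natural domain of the half-Laplacian, so it suffices to show that the Gagliardo double integral on the right does not increase when $u$ is replaced by $u^*$.

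For each $\varepsilon>0$ I would introduce the truncated kernel $K_\varepsilon(z) := |z|^{-2}\chi_{\{|z|>\varepsilon\}}(z)$; this is nonnegative, even, integrable, and nonincreasing in $|z|$, hence $K_\varepsilon$ coincides with its own symmetric decreasing rearrangement. Expanding the square and applying Fubini gives
\begin{align*}
\int_\R\int_\R K_\varepsilon(x-y)(u(x)-u(y))^2\, dx\, dy = \frac{4}{\varepsilon}\|u\|_{L^2(\R)}^2 - 2\int_\R\int_\R K_\varepsilon(x-y)\, u(x)\, u(y)\, dx\, dy.
\end{align*}
The first summand is invariant under $u \mapsto u^*$ because the $L^2$-norm is preserved by rearrangement, while to the bilinear second summand I would apply Riesz's rearrangement inequality (Lemma~7.17 of \cite{lieb-loss}) to the nonnegative, symmetric-decreasing kernel $K_\varepsilon$ together with the nonnegative functions $u,\,u$, yielding
\begin{align*}
\int_\R\int_\R u(x)\, u(y)\, K_\varepsilon(x-y)\, dx\, dy \le \int_\R\int_\R u^*(x)\, u^*(y)\, K_\varepsilon(x-y)\, dx\, dy.
\end{align*}
Subtracting then gives the claimed inequality with the truncated kernel. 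Letting $\varepsilon \downarrow 0$, the integrands are nonnegative and pointwise nondecreasing to $(u(x)-u(y))^2/(x-y)^2$ and $(u^*(x)-u^*(y))^2/(x-y)^2$ respectively, so monotone convergence transfers the inequality to the unregularized Gagliardo integrals and yields $\|u\|_{\mathring{H}^{1/2}(\R)}^2 \ge \|u^*\|_{\mathring{H}^{1/2}(\R)}^2$.

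The hardest step will be the rigidity (equality) assertion, since Riesz's inequality is strict only under the additional hypothesis that the kernel be strictly radially decreasing on its support together with a nondegeneracy assumption on the functions. For this I would invoke the strict version of the Riesz rearrangement inequality also recorded in \cite{lieb-loss}: because $K_\varepsilon$ is strictly decreasing on $\{|z|>\varepsilon\}$, equality in the bilinear inequality above for every sufficiently small $\varepsilon$ forces $u$ (which is nonnegative and vanishes at infinity as $u\in L^2(\R)$) to be a translate of $u^*$. Since equality in the unregularized $\mathring{H}^{1/2}$ inequality propagates to every truncation by monotone convergence, this yields the desired rigidity.
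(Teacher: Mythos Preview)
Your overall strategy (Gagliardo form, regularize the kernel, apply Riesz, pass to the limit) is the standard route and is what underlies the paper's bare citation of \cite{lieb-loss}; the paper itself gives no further argument. But there is a slip in your regularization: the truncated kernel $K_\varepsilon(z)=|z|^{-2}\chi_{\{|z|>\varepsilon\}}$ is \emph{not} symmetric decreasing, since it vanishes on $(-\varepsilon,\varepsilon)$ and then jumps up to $\varepsilon^{-2}$ at $|z|=\varepsilon$. Its rearrangement is $K_\varepsilon^*(z)=(|z|+\varepsilon)^{-2}\neq K_\varepsilon$, so Riesz only delivers $\iint u\,K_\varepsilon\,u\le\iint u^*\,K_\varepsilon^*\,u^*$, which is not the inequality you need. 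The fix is to cap or smooth instead of cut: for instance $\tilde K_\varepsilon(z)=(z^2+\varepsilon^2)^{-1}$ is genuinely symmetric decreasing, integrable, and increases monotonically to $|z|^{-2}$, so your expansion/limit argument goes through verbatim with $2\pi/\varepsilon$ in place of $4/\varepsilon$.

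The equality case also needs tightening. The strict Riesz theorem requires the kernel to be strictly decreasing on all of $\R$, not just on $\{|z|>\varepsilon\}$; neither your $K_\varepsilon$ nor the cap $\min(|z|^{-2},\varepsilon^{-2})$ qualifies. Moreover, ``equality in the limit $\Rightarrow$ equality at each $\varepsilon$'' does not follow from monotone convergence of the two sides separately; you need monotonicity of the \emph{gap} $\varepsilon\mapsto I_\varepsilon(u)-I_\varepsilon(u^*)$, which amounts to applying Riesz also to the difference kernels $\tilde K_{\varepsilon_1}-\tilde K_{\varepsilon_2}$ for $\varepsilon_1<\varepsilon_2$. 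With the choice $\tilde K_\varepsilon(z)=(z^2+\varepsilon^2)^{-1}$ these differences are strictly symmetric decreasing, so both the propagation step and the strict Riesz conclusion go through cleanly.
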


To prove our main result, we first need the following preliminary
lemma. The idea goes back to \cite{melcher03}, except that here our
variable is the angle function rather than the first component of the
magnetization vector. Let us point out that the rearrangement argument
of Lemma \ref{l-due} only yields the non-increasing property of $\th$
for the minimizer.  To prove strict decrease, one needs an additional
argument presented in Step~3 of the proof of the main theorem below.

\begin{lemma}[{\bf rearrangement}]\label{l-due}
  Let $\th\in \calA$ be such that $\th(\R)\subset
  [\theta_h,\pi-\theta_h]$ and $E(\th)<+\infty$.  Then there exists a
  function $\th^o(x)\colon \R\to[\theta_h, \pi-\theta_h]$ satisfying
  \eqref{limcon} and the following properties:
\begin{align}
  \th^o(0)=\tfrac{\pi}{2}, \quad \th^o(x)=\pi-\th^o(-x), \quad
  \th^o_x\leq 0 ~~ \text{on} ~~ \R \quad \text{and} \quad E(\th^o)\leq
  E(\th),
\end{align}
where the equality in the latter expression holds only if $\sin \th$
is a translation of a symmetric decreasing function.
\end{lemma}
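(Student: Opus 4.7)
My plan is to apply a symmetric decreasing rearrangement to $\sin\th$ after first folding $\th$ into the range $[\theta_h, \pi/2]$ via the auxiliary variable $\rho$ from \eqref{rho}. First I will observe that $\sin\rho = \sin\th$ and $|\rho_x| = |\th_x|$ a.e., so that
\begin{align}
E(\th; \R) = \frac{1}{2} \int_\R |\rho_x|^2\, dx + \frac{1}{2} \int_\R v^2\, dx + \frac{\nu}{4} \|v\|_{\mathring{H}^{1/2}(\R)}^2,
\end{align}
where $v := \sin\rho - h \geq 0$ (the additive constant $h$ does not contribute to the $\mathring{H}^{1/2}$-seminorm). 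The estimate \eqref{coest} together with $E(\th) < +\infty$ will then give $\rho - \theta_h \in H^1(\R)$, hence $v \in H^1(\R) \cap L^\infty(\R)$.

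Next I will replace $v$ by its symmetric decreasing rearrangement $v^*$ and set $\rho^o := \arcsin(v^* + h) \in [\theta_h, \pi/2]$. Since $\arcsin(\,\cdot + h)$ is strictly increasing on $[0, 1-h]$, a distribution-function identification shows that $\rho^o - \theta_h = (\rho - \theta_h)^*$, so Pólya-Szegő applied to $\rho - \theta_h \in H^1(\R)$ yields $\int_\R |\rho^o_x|^2\, dx \leq \int_\R |\rho_x|^2\, dx$. Equi-measurability preserves $\int v^2$, while Lemma \ref{l-uno} gives $\|v^*\|_{\mathring{H}^{1/2}}^2 \leq \|v\|_{\mathring{H}^{1/2}}^2$ with strict inequality unless $v$ is a translation of a symmetric decreasing function, providing the energy inequality together with the desired equality case.

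I will then unfold by setting $\th^o(x) := \rho^o(x)$ for $x \geq 0$ and $\th^o(x) := \pi - \rho^o(x)$ for $x < 0$. The symmetry $\rho^o(-x) = \rho^o(x)$ immediately gives $\th^o(-x) = \pi - \th^o(x)$, and the monotonicity $\rho^o_x \leq 0$ on $[0,\infty)$ combined with the sign flip on $(-\infty, 0)$ yields $(\th^o)_x \leq 0$ on all of $\R$. The identities $\sin\th^o = \sin\rho^o = v^* + h$ and $|(\th^o)_x| = |\rho^o_x|$ then pull the energy of $\th^o$ back to the rearranged quantities computed in the previous step.

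The hard part will be showing that the two pieces of $\th^o$ glue together continuously at $0$, so that $\th^o(0) = \pi/2$ and $\th^o \in \calA$. This reduces to the identity $v^*(0) = 1 - h$, i.e., that $\sin\th$ attains the value $1$ somewhere; it will follow from the intermediate value theorem applied to the continuous $\th$ with limits $\theta_h < \pi/2$ at $+\infty$ and $\pi - \theta_h > \pi/2$ at $-\infty$. Once this is in place, the oddness of $\rho^o_x$ about $0$ makes the one-sided derivatives of $\th^o$ at the origin agree, and $\th^o - \eta_h \in H^1(\R)$ follows from $\rho^o - \theta_h \in H^1(\R)$.
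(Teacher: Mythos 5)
Your proposal is correct and follows essentially the same route as the paper: fold $\th$ to $\rho\in[\theta_h,\pi/2]$, apply the symmetric decreasing rearrangement together with the P\'olya--Szeg\H{o} inequality for the gradient term, Lemma \ref{l-uno} for the $\mathring H^{1/2}$ term (with its equality case), equimeasurability for the anisotropy term, and unfold by reflecting about $\pi/2$ on the negative half-line. The only cosmetic difference is that you rearrange $v=\sin\rho-h$ and recover $\rho^o=\arcsin(v^*+h)$, while the paper rearranges $\rho-\theta_h$ and uses $(\Phi\circ f)^*=\Phi\circ f^*$ to identify $(\sin\rho)^*=\sin\rho^o$ — the same identity read in the opposite direction — and your explicit IVT check that $\rho^o(0)=\pi/2$ is a point the paper leaves implicit.
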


\begin{proof}
  Let $\rho \colon \R\to [\theta_h, \tfrac{\pi}{2}]$ be defined as in
  \eqref{rho} for every $x\in \R$.  Then, from the discussion at the
  beginning of Sec. \ref{s:lems} we have $E(\th, \R)=E(\rho,
  \R)$. Now, define $\rho^{o}:\R\to [\theta_h, \tfrac{\pi}{2}]$ by
  setting
\begin{equation}
\rho^o(x):=
\theta_h+\big(\rho(x)-\theta_h\big)^*, 
\end{equation}
where given a function $f$, $f^*$ stands for the symmetric
rearrangement of $f$.  This implies that $\rho^o$ is even,
$(\rho^*)_x\leq 0$ on $\R^+$, and $\rho^o(x)\to \theta_h$ as $|x|\to
+\infty$.  Moreover, the level sets of $\rho^o$ are simply the
rearrangement of the level sets of $\rho$, i.e.,
\begin{align}
\{x: \, \rho^o(x)>t\} = \{x:\, \rho(x)>t\}^*.
\end{align}
A consequence of this is the equimeasurability of the functions
$\rho^o$ and $\rho$, i.e.,
\begin{align}
\calL^1\big(\{x: \, \rho^o(x)>t\} \big)
=\calL^1\big(\{x: \, \rho(x)>t\} \big)
\end{align}
for every $t>0$. This, together with the Layer Cake Representation
Theorem~1.13 in \cite{lieb-loss}, yields
\begin{align}
  \int_\R \phi\big(\rho^o(x)\big)\, dx = \int_\R \phi(\rho(x))\, dx
\end{align}
for every monotone, absolutely continuous function $\phi : [0, \infty)
\to [0, \infty)$ satisfying $\phi(0) = 0$. Choosing $\phi(z)= \left(
  \sin \left( \min(z, \tfrac{\pi}{2}) \right)-h \right)_+^2$, we see
that
\begin{equation}\label{rho-due}
  \int_\R \big(\sin  \rho^o(x) -h\big)^2 \, dx
  = \int_\R \big(\sin\rho -h\big)^2 dx.
\end{equation}

We recall now the following property of rearrangements of 
any given Borel measurable function $f:\R\to\R$ 
vanishing at infinity:
\begin{align}
(\Phi\circ |f|)^* =\Phi\circ f^*
\end{align}
for every $\Phi:\R^+\to\R^+$ nondecreasing.  Applying the above
property to $f:=\rho-\theta_h:\R\to [0, \tfrac{\pi}{2}-\theta_h]$ and
$\Phi: f \mapsto \sin (\theta_h +f)$, which is increasing for every $f
\in [0, \tfrac{\pi}{2}-\theta_h]$, we get
\begin{equation}
  (\sin\rho)^*= \sin \big( \theta_h + (\rho-\theta_h)^*\big)= \sin
  \rho^o. 
\end{equation}
In view of the above identity and using also Lemma~\ref{l-uno}, we
have
\begin{align}\label{rho-tre}
  \int_\R \sin\rho^o \bigg(-\frac{d^2}{d x^2}\bigg)^{1/2}\sin\rho^o \,
  dx =\int_\R(\sin\rho)^* \bigg(-\frac{d^2}{d
    x^2}\bigg)^{1/2}(\sin\rho)^* \, dx \hspace{1cm}
  \notag \\
  \leq \int_\R \sin\rho \bigg(-\frac{d^2}{d x^2}\bigg)^{1/2}\sin\rho
  \, dx = \int_\R \sin\th \bigg(-\frac{d^2}{d x^2}\bigg)^{1/2}\sin\th
  \, dx,
\end{align}
where the equality holds only if $\sin \th$ is a translation of a
symmetric decreasing function.  Finally, by Lemma~7.17 in
\cite{lieb-loss},
\begin{equation}\label{rho-uno}
  \int_\R |\rho^o_x|^2 dx =
  \int_\R\Big[\big(\rho(x)-\theta_h\big)^*_x\Big]^2 dx \leq \int_\R
  |\rho_x|^2 dx  =\int_\R |\th_x|^2 dx. 
\end{equation}

Define $\th^{o}:\R\to [\theta_h, \pi-\theta_h]$ by setting
\begin{equation}
  \th^o(x):=\left\{\begin{array}{lll}
      \rho^o(x) &\qquad &\text{if } x\geq 0
      \\
      \pi  -\th^{o}(-x)= \pi-\rho^o(x)&\qquad &\text{if } x<0.
\end{array}\right.
\end{equation}
Since $\sin\th^o=\sin\rho^o$, by \eqref{rho-due}, \eqref{rho-tre}, and
\eqref{rho-uno} we conclude.
\end{proof}

We now investigate the decay of monotone solutions of \eqref{1del}
satisfying \eqref{limcon}. This information, combined with the
properties of the fundamental solution of the linearization of
\eqref{1del} around $\theta_h$ (see, e.g., \cite[Sec. 5.1]{garcia04}),
will be used to establish the precise asymptotics behavior of the
minimizers of $E$ over $\calA$ as $x \to \pm \infty$.

\begin{lemma}
  \label{l:udu} Let $\th \in C^\infty(\R)$ be a non-increasing
  solution of \eqref{1del} satisfying \eqref{limcon}, with $\th(-x) =
  \pi - \th(x)$ for all $x \in \R$ and all derivatives vanishing as $x
  \to \pm \infty$. Let $u = \sin \th - h$ and assume that there exist
  $c > 0$ and $\alpha \in (0, 2]$ such that $\| u
  \|_{W^{2,\infty}(\mathbb R)} \leq c$ and
  \begin{align}
    \label{eq:ual}
    u(x) \leq {c \over 1 + |x|^\alpha} \qquad \forall x \in \R.
  \end{align}
  Then there exists $C = C(c, \alpha, h, \nu) > 0$ such that
  \begin{align}
    \label{eq:ualx}
    |u_x(x)| , |u_{xx}(x)|, \left| \left( -{d^2 \over dx^2}
      \right)^{1/2} u(x) \right| \leq {C \over 1 + |x|^\alpha} \qquad
    \forall x \in \R.
  \end{align}
\end{lemma}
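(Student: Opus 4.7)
My plan is to combine three pointwise estimates---a singular-integral bound on $(-\Delta)^{1/2}u$, a pointwise identity for $u_{xx}$ from the Euler--Lagrange equation \eqref{1del}, and a Taylor-type interpolation for $u_x$---and bootstrap them. On any fixed bounded set the desired bounds follow trivially from $\|u\|_{W^{2,\infty}}\leq c$ together with the $\delta=1$ case of the first estimate, so I focus on $|x|$ large.

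The three ingredients are as follows. First, for the principal-value integral in \eqref{12lap} I split at a free scale $\delta\in(0,1]$. On the near region $\{|y-x|\leq\delta\}$, I Taylor-expand $u(y)=u(x)+u_x(x)(y-x)+r(x,y)$; the linear term integrates to zero in the PV sense by symmetry, and the quadratic remainder contributes at most $(\delta/\pi)\sup_{|z-x|\leq\delta}|u_{xx}(z)|$. On the far region, I further split according to $|y|\leq|x|/2$ (where $|x-y|\geq|x|/2$) versus $|y|>|x|/2$ (where $|u(y)|\leq C/(1+|x|^\alpha)$) and use $\int_{|y-x|>\delta}(x-y)^{-2}\,dy\leq 2/\delta$; a logarithm at the borderline $\alpha=1$ is absorbed easily. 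This yields
\[
|(-\Delta)^{1/2}u(x)|\ \leq\ \tfrac{\delta}{\pi}\sup_{|z-x|\leq\delta}|u_{xx}(z)|\ +\ \tfrac{C}{\delta(1+|x|^\alpha)}.
\]
Second, the equation \eqref{1del} rewrites as $\theta_{xx}=\cos\theta\,(u+\tfrac{\nu}{2}(-\Delta)^{1/2}u)$; combined with $u_{xx}=-\sin\theta\,\theta_x^2+\cos\theta\,\theta_{xx}$ and $\theta_x=u_x/\cos\theta$, and using that the symmetry $\theta(-x)=\pi-\theta(x)$ with $\theta_h\in[0,\tfrac{\pi}{2})$ forces $|\cos\theta(x)|\geq c_0>0$ for $|x|$ larger than some $R_0$, I get
\[
|u_{xx}(x)|\ \leq\ C\bigl(u_x(x)^2+|u(x)|+|(-\Delta)^{1/2}u(x)|\bigr).
\]
Third, a Taylor expansion at step $h$ gives
\[
|u_x(x)|\ \leq\ \tfrac{|u(x+h)|+|u(x-h)|}{2h}\ +\ \tfrac{h}{2}\sup_{|z-x|\leq h}|u_{xx}(z)|,
\]
which upon optimization in $h$ yields $u_x(x)^2\leq 4\sup_{|z-x|\leq h^*}|u|\cdot\sup_{|z-x|\leq h^*}|u_{xx}|$.

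The bootstrap then runs as follows. Suppose $|u_{xx}(z)|\leq C_n/(1+|z|^{a_n})$ for $|z|$ large with $a_n\in[0,\alpha]$. The interpolation gives $u_x(x)^2\leq O(C_n)/(1+|x|^{\alpha+a_n})$; choosing $\delta$ in the first estimate to balance the two terms yields $|(-\Delta)^{1/2}u(x)|\leq O(\sqrt{C_n})/(1+|x|^{(\alpha+a_n)/2})$; substituting both into the Euler--Lagrange bound gives $|u_{xx}(x)|\leq C_{n+1}/(1+|x|^{a_{n+1}})$ for $|x|\geq R_n$, with $a_{n+1}=(a_n+\alpha)/2$, $C_{n+1}=\lambda\sqrt{C_n}+\mathrm{const}$ for a fixed $\lambda$, and $R_n$ a crossover radius beyond which the $u_x^2$ contribution (which would otherwise inflate $C_{n+1}$ linearly) is absorbed by the $(-\Delta)^{1/2}u$ term. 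Starting from $a_0=0,\ C_0=c$, we have $a_n\to\alpha$ geometrically and $C_n$ converges to the stable fixed point of $C\mapsto\lambda\sqrt{C}+\mathrm{const}$; passing to the limit in $(1+|x|^{a_n})|u_{xx}(x)|\leq C_n$ (valid for $|x|\geq\sup_n R_n$, which stays finite) produces $(1+|x|^\alpha)|u_{xx}(x)|\leq$ uniform constant, and the bounds on $|u_x|$ and $|(-\Delta)^{1/2}u|$ then follow from the third and first estimates. The main obstacle is verifying that the sequence $\{C_n\}$ stays bounded: the naive contribution of $u_x^2$ to $|u_{xx}|$ is linear in $C_n$, which is dangerous, but this contribution has the better decay rate $1/|x|^{\alpha+a_n}$ rather than $1/|x|^{(\alpha+a_n)/2}$ and so is dominated above a crossover scale that stays bounded as $n\to\infty$; organizing this trade-off carefully, and matching the small-$|x|$ regime using the a priori $W^{2,\infty}$ bound, is what closes the proof.
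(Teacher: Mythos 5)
Your three ingredients are individually sound (note that the one-sided hypothesis \eqref{eq:ual} suffices because monotonicity and \eqref{limcon} force $u\ge 0$), but the closing step of your bootstrap has a genuine gap: you assert that the validity radii $R_n$ satisfy $\sup_n R_n<+\infty$ and then pass to the limit on $\{|x|\ge \sup_n R_n\}$. As you have set things up, that is not justified. At stage $n+1$, both the balanced window $\delta\sim\sqrt{C/C_n}\,(1+|x|)^{-(\alpha-a_n)/2}$ in your singular-integral estimate and the interpolation window $h^*\sim\sqrt{\sup|u|/\sup|u_{xx}|}$ are of order one and do not shrink with $n$; hence the stage-$(n+1)$ bound at $x$ needs the stage-$n$ bound on a neighborhood of $x$ of fixed size, so $R_{n+1}\ge R_n+O(1)$ and $\sup_n R_n=+\infty$. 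The crossover radius you do track (where the $u_x^2$ contribution, linear in $C_n$, is absorbed) is indeed bounded once $C_n\le\bar C$, but it is not the only source of growth of $R_n$. The argument can be rescued: since $\alpha-a_n=\alpha\,2^{-n}$ and $R_n\lesssim R_0+n$, for each large $|x|$ you may invoke the stage $n(x)\sim\log_2\log|x|$, for which $R_{n(x)}\le |x|$ while the loss factor $(1+|x|)^{\alpha-a_{n(x)}}=\exp\!\big(\alpha 2^{-n(x)}\log(1+|x|)\big)$ stays bounded; this diagonal choice yields \eqref{eq:ualx}. But that extra argument is needed and is missing from your plan, which as written would only give the decay rate $a_n<\alpha$ on any region where all stages are simultaneously valid.

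For comparison, the paper avoids the exponent creep (and hence this issue) altogether. It works with the maximal function $M(x):=\max_{[x,\infty)}\big|(-d^2/dx^2)^{1/2}u\big|$, controls $u_x$ not by Landau-type interpolation but by multiplying \eqref{1del} by $\th_x$ and integrating over $(x,\infty)$, which with monotonicity gives $\th_x^2(x)\le u^2(x)+\nu u(x)M(x)$ in \eqref{eq:uxM} and then $|u_{xx}|\le C(u+M)$ in \eqref{eq:uxxM}; in the analogue of your singular-integral bound it fixes $\delta$ small but independent of $x$ so that the near-field term contributes at most $\tfrac12 M(x-\delta)$, yielding the recursion \eqref{eq:MM}, $M(x)\le Cx^{-\alpha}+\tfrac12 M(x-\delta)$, which is then propagated by induction along $x_n=x_0+n\delta$. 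This ``contraction in $M$, induction in the spatial variable'' structure delivers the full exponent $\alpha$ in a single pass, with constants controlled once and for all; adopting it would let your write-up close without any limit over a sequence of global decay estimates.
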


\begin{proof}
  Throughout the proof we assume that $x > 0$ is sufficiently large
  depending only on $c$, $\alpha$ and $h$. All the constants in the
  estimates are also assumed to depend only on $c$, $\alpha$, $h$ and
  $\nu$. 

  Equation \eqref{1del}
  written in terms of $u$ reads
  \begin{align}
    \label{eq:elu}
    {u_{xx} \over 1 - h^2 - 2 u h - u^2} + {(h + u) u_x^2 \over (1 -
      h^2 - 2 u h - u^2)^2 } = u + {\nu \over 2} \left( -{d^2 \over
        dx^2} \right)^{1/2} u.
  \end{align}
  In particular, since $u(x)$ goes to zero as $x \to \infty$ together
  with all its derivatives, we also have that $\displaystyle \lim_{x
    \to \infty} M(x) = 0$, where
  \begin{align}
    \label{eq:M}
    M(x) := \max_{[x,
      \infty)} \left| \left( -{d^2 \over dx^2} \right)^{1/2} u
    \right|. 
  \end{align}
  Now, multiply \eqref{1del} by $\th_x$ and integrate over $(x,
  \infty)$. Together with the monotonicity of $u(x)$, this yields
  \begin{align}
    \label{eq:uxM}
    \th_x^2(x) \leq u^2(x) + \nu u(x) M(x).
  \end{align}
  Using the fact that $u_x = \th_x \cos \th$, from \eqref{eq:elu} and
  \eqref{eq:uxM} we obtain
  \begin{align}
    \label{eq:uxxM}
    |u_{xx}(x)| \leq C (u(x) + M(x) ).
  \end{align}

  On the other hand, for every $\delta > 0$ sufficiently small we have
  by \eqref{12lap}
  \begin{align}
    \label{eq:D12u}
    \left( -{d^2 \over dx^2} \right)^{1/2} u(x) = \frac1\pi
    \int_{-\infty}^{-\tfrac12 x} {u(x) - u(y) \over (x - y)^2} dy +
    \frac1\pi \int_{-\tfrac12 x}^{\tfrac12 x} {u(x) - u(y) \over (x -
      y)^2} dy
    \notag \\
    + \frac1\pi \int_{\tfrac12 x}^{x-\delta} {u(x) - u(y) \over (x -
      y)^2} dy + \frac1\pi \dashint_{x-\delta}^{x+\delta} {u(x) - u(y)
      \over (x - y)^2} dy + \frac1\pi \int_{x+\delta}^{+\infty} {u(x)
      - u(y) \over (x - y)^2} dy.
  \end{align}
  Using the symmetric decreasing property of $u(x)$, we can then
  estimate the left-hand side of \eqref{eq:D12u} as
  \begin{align}
    \label{eq:D12uu}
    \left| \left( -{d^2 \over dx^2} \right)^{1/2} u(x) \right| \leq C
    \left( \delta^{-1} u(\tfrac12 x) + \delta \max_{[x-\delta,
        x+\delta]} |u_{xx}| + \int_{-\tfrac12 x}^{\tfrac12 x} {u(y)
        \over (x - y)^2} dy \right),
  \end{align}
  where to estimate the fourth term in the right-hand side of
  \eqref{eq:D12u} we used Taylor formula $u(y) = u(x) + u_x(x) (y - x)
  + \tfrac12 u_{xx}(\bar x(y)) (y - x)^2$, for some $\bar x(y)$ lying
  between $x$ and $y$, and noted that the linear term in $y - x$ does
  not contribute to the principal value of the integral.  Combining
  the estimate in \eqref{eq:D12uu} with \eqref{eq:uxxM} and the
  assumption on the decay of $u$ in \eqref{eq:ual} then yields
  \begin{align}
    \label{eq:D12uuu}
    \left| \left( -{d^2 \over dx^2} \right)^{1/2} u(x) \right| \leq C
    \left( {\delta^{-1} \over x^\alpha} + \delta M(x - \delta) + {1
        \over x^2} + {1 \over x^{1+\alpha}} \right).
  \end{align}
  Now, choosing $\delta > 0$ sufficiently small, the last estimate
  implies
  \begin{align}
    \label{eq:D12ult}
    \left| \left( -{d^2 \over dx^2} \right)^{1/2} u(x) \right| \leq C
    x^{-\alpha} + \tfrac12 M(x - \delta),
  \end{align}
  for every $\alpha \leq 2$. Therefore, by monotonicity of $M(x)$, we
  also have 
  \begin{align}
    \label{eq:MM}
    M(x) \leq C x^{-\alpha} + \tfrac12 M(x - \delta),
  \end{align}
  for all $x > 0$ sufficiently large. 

  Let us show that \eqref{eq:MM} implies the same kind of bound as in
  \eqref{eq:ual} for $M(x)$. We use an induction argument. Let $x_n :=
  x_0 + n \delta$ and $M_n := M(x_n)$ for $n \in \mathbb N$ and some
  $x_0 > 0$ to be fixed shortly. Clearly, by \eqref{eq:MM} we have
  $M_1 \leq c x_1^{-\alpha}$ for $c = 4 C + \tfrac12 M(x_0) (x_0 +
  \delta)^\alpha$. We claim that if also $M_{n-1} \leq c
  x_{n-1}^{-\alpha}$, then \eqref{eq:MM} implies $M_n \leq c
  x_n^{-\alpha}$, provided that $x_0$ is chosen to be sufficiently
  large. Indeed, by \eqref{eq:MM} and the assumption $M_{n-1} \leq c
  x_{n-1}^{-\alpha}$ we have
  \begin{align}
    \label{eq:Mn}
    M_n \leq \left( C + {c x_n^\alpha \over 2 ( x_n - \delta)^\alpha }
    \right) x_n^{-\alpha} \leq (C + \tfrac34 c) x_n^{-\alpha} \leq c
    x_n^{-\alpha},
  \end{align}
  provided that $x_0 \geq 6 \delta$, and the claim follows. Once we
  established the bound for $M_n$, the estimate $M(x) \leq C / (1 +
  x^\alpha)$ for all $x \in \R$ follows by monotonicity and
  boundedness of $M(x)$.

  To conclude the proof of the lemma, we combine the estimate for
  $M(x)$ just obtained with \eqref{eq:uxM} and \eqref{eq:uxxM}.
\end{proof}

\section{ Proof of the main result}
\label{s:proofs}

We are now in a position to prove the main result of this paper,
Theorem~\ref{t-one}.

\smallskip

\noindent {\bf  Step~1: existence.}
Let $\{\th_n\}\subset \calA$ be a minimizing sequence, i.e.,
 \begin{align}
 \lim_{n\to +\infty} E(\th_n) = \inf\{ E(\th): \th\in\calA\}<+\infty.
 \end{align}
 By translation invariance we may assume that $\th_n(0)=\frac{\pi}{2}$ 
 and 
 by Lemma~\ref{l-due}  
 that each $\th_n$  satisfies
 \begin{equation}\label{thnpro}\begin{split}
&
 \text{ 
 $\th_n(\R)\subset [\theta_h, \pi-\theta_h]$,  \,
 $\th_n(x)=\pi-\th_n(-x)$, 
 \, 
   $\lim_{x\to+\infty}\th_n(x)=\theta_h$,
 \, 
 $(\th_n)_x\leq 0$ on $\R$.}
\end{split} \end{equation} For each $n$, let $\rho_n\colon \R\to
[0,\tfrac{\pi}{2}]$ be defined as in \eqref{rho} (after replacing
$\th$ with $\th_n$).  By \eqref{coest} we have
\begin{align}
  \big\|\rho_n-\theta_h \big\|_{H^1(\R)}^2 +
  \|\sin\rho_n\|_{\mathring{H}^{1/2}(\R)}^2 \leq C<+\infty.
\end{align}
In view of Banach-Alaoglu-Bourbaki's Theorem, there exist a
subsequence of $\{\rho_n\}$, not relabeled, a function $v \in H^1(\R)$
and a function $u\in H^{1/2}(\R)$ such that
\begin{equation}\label{weaH1H1/2}
  \text{ $\rho_n-\theta_h\rightharpoonup  v $ weakly in $H^1(\R)$ and
    $\sin\rho_n\rightharpoonup  u$ weakly in $H^{1/2}(\R)$.} 
\end{equation}

Let us fix $k\in \N$. By the compact embedding
$H^1(-k,k)\subset\subset L^2(-k,k)$, we may find a subsequence, not
relabeled, such that $\rho_n-\theta_h\to v $ strongly in $L^2(-k, k)$
and $\calL^1$-a.e. in $(-k,k)$.  Thus, $\rho_n\to \rho$
$\calL^1$-a.e. in $(-k,k)$, where $\rho:= v +\theta_h$.  Moreover,
since
\begin{align}
  \sup_n \|\sin\rho_n\|_{\mathring{H}^{1/2}(-k,k)}^2 = \sup_n \left(
    \frac{1}{2 \pi} \int_{-k}^k \int_{-k}^k \frac{\big( \sin
      \rho_n(x)- \sin \rho_n(y)\big)^2}{ (x-y)^2}\, dx \, dy \right)
  <+\infty,
\end{align}
it follows by the Fractional Compact Embedding (see for instance
Section~8.5 of \cite{lieb-loss})
that $\{\sin\rho_n-h\}$ is precompact in $L^2(-k, k)$, that is, up to
a subsequence, not relabeled, $\sin\rho_n\to u$ strongly in $L^2(-k,
k)$ and $\calL^1$-a.e. in $(-k,k)$.  Then by the uniqueness of the
limits, $u=\sin \rho$ $\calL^1$-a.e. in $(-k,k)$ for every $k$.

Finally, by the lower semicontinuity of the $L^2$ and $H^{1/2}$ norms
with respect to their weak convergences and Fatou Lemma applied to the
sequence $\{(\sin \rho_n- h )^2 \}$, we have
\begin{align}
  \frac{1}{2}\int_{-k}^{k}|\rho_x|^2\, dx +\frac{1}{2}\int_{-k}^{k}
  (\sin \rho- h )^2\, dx + \frac{\nu}{8\pi } \int_{-k}^k\int_{-k}^k
  \frac{\big( \sin \rho(x)- \sin \rho(y)\big)^2}{ (x-y)^2}\, dx \, dy
  \notag \\
  \leq \liminf_{n\to+\infty} \frac{1}{2}\int_{-k}^{k}|(\rho_n)_x|^2\,
  dx +\frac{1}{2}\int_{-k}^{k} (\sin \rho_n-h )^2\, dx
  \\
  + \frac{\nu}{8\pi } \int_{-k}^k\int_{-k}^k \frac{\big( \sin
    \rho_n(x) - \sin \rho_n(y)\big)^2}{ (x-y)^2}\, dx \, dy \leq
  \liminf_{n\to +\infty } E(\rho_n, \R). \notag
\end{align}
Applying Lebesgue's Monotone Convergence Theorem to the sequences $\{
\chi_{(-k, k)} [(\rho_x)^2+(\sin \rho- h )^2]\}$ and
$\Big\{\chi_{(-k,k)^2} \frac{( \sin \th(x)- \sin \th(y))^2}{
  (x-y)^2}\Big\}$, we then obtain
\begin{equation}\label{rhomin}
  E(\rho,\R)\leq \liminf_{n\to +\infty }  E(\rho_n, \R)= \liminf_{n\to
    +\infty } E(\th_n, \R).
\end{equation}

Given such a $\rho$, let us define $\th:\R\to \R$
by setting
\begin{equation}\label{th^zero}
  \th^{(0)}(x):=\left\{\begin{array}{lll}
      \rho(x) &\qquad &\text{if } x\geq 0,
      \\
      \pi-\rho(x)&\qquad &\text{if }x<0,
\end{array}\right.
\end{equation}
for every $x\in \R$. 
We claim that  $\th^{(0)}$ satisfies the following properties:
\begin{equation}\label{thpro}
  \th^{(0)}(0)=\tfrac{\pi}{2},
  \quad
  \th^{(0)}(x)=\pi-\th^{(0)}(-x),
  \quad \lim_{x\to+\infty}\th^{(0)}(x)=\theta_h,
  \quad 
  \th^{(0)}_x\leq 0 ~~ \text{on} ~~ \R.
\end{equation}
Since $\rho-\theta_h\in H^1(\R)$, by Morrey's Theorem we have
$\displaystyle\lim_{x \to +\infty} \th^{(0)}(x) = \lim_{x \to +\infty}
\rho(x) = \theta_h$ and $\th^{(0)}\in C(\R)$.  Finally, since weak
convergence in $H^{1}(\R)$ implies pointwise and uniform convergence
on compacts (see for instance Theorem~8.6 in \cite{lieb-loss}), by the
first of \eqref{weaH1H1/2} we have $\rho_n\to \rho$ locally uniformly
on $\R$.  Therefore, the following properties
\begin{align}
  \rho_n(0)= \tfrac{\pi}{2}, \quad \rho_n(x)=\rho_n(-x), \quad
  \rho_n(x_1)\geq \rho_n(x_2) ~~ \text{if}~~ 0\leq x_1\leq x_2,
\end{align}
are preserved when taking the limit as $n\to+\infty$ which in turn
implies by construction that the properties in \eqref{thpro} are
satisfied.

Since by construction $E(\th^{(0)},\R)=E(\rho,\R)$, by \eqref{rhomin}
and \eqref{thpro}, we conclude that $\th^{(0)}$ is a minimizer for $E$
in the class $\calA$.

\smallskip

\noindent {\bf Step~2: Euler-Lagrange equation.}  Since the first
variation of $E$ at any global minimizer of $E$ is zero, we have that
the minimizer $\th^{(0)}$ of $E$, as well as any other minimizer of
$E$, satisfies
\begin{align}\label{1dfv}
  0= \left. \frac{d}{dt} \right|_{t=0} E(\th^{(0)}+t \varphi, \R)
  =\int_\R \bigg( \varphi_x \th^{(0)}_x + \varphi \cos\th^{(0)}
  \sin\th^{(0)} - h\varphi \cos\th^{(0)} \notag \\
  +\frac{\nu}{2} \varphi \cos \th^{(0)}
  \bigg(-\frac{d^2}{d x^2}\bigg)^{1/2}\sin\th^{(0)}
  \bigg)dx,
\end{align}
for every $\varphi\in H^1(\R)$. 
In other words, $\phi=\th^{(0)}$ is a weak solution of the ordinary
differential equation
\begin{equation}\label{1del_b}
-\phi_{xx} + b(x) \cos\phi =0,
\end{equation}
where $b(x):=\sin\th^{(0)} - h +\tfrac{1}{2} \nu (-d^2/d x^2)^{1/2}
\sin\th^{(0)} $.  Since $\th^{(0)}_x\in L^2(\R)$ and the sine function
is Lipschitz, we have $(\sin\th^{(0)})_x= \th^{(0)}_x \cos\th^{(0)}\in
L^2(\R)$, and by the estimate
\begin{equation}\label{H1subsetH1/2}
  \bigg\| u+ \frac{\nu}{2}\left(-\frac{d^2}{d
      x^2}\right)^{1/2}  u \bigg\|_{L^2(\R)} 
  \leq C \|u\|_{H^1(\R)},
\end{equation}
applied to $u:= \sin \th^{(0)}-h$, we conclude that $b(x)\in L^2(\R)$.
By \eqref{1del_b}, the weak second derivative of $\th^{(0)}$ is given
by $\th^{(0)}_{xx}= b(x) \cos\th^{(0)}$ and is, hence, in $L^2(\R)$.
Therefore, $\th^{(0)}_x\in H^1(\R)$ and by Morrey's Theorem, together
with the fact that $\th^{(0)}\in C(\R)$, we have that $\th^{(0)}_x$ is
continuous and bounded on $\R$ as well.  Using the interpolation
inequality (here we follow the arguments of \cite{capella07})
\begin{align}
  \|\th^{(0)}_x\|_{L^4(\R)} \leq \|\th^{(0)}_x\|_{L^\infty(\R)}^{1/2}
  \|\th^{(0)}_x\|_{L^2(\R)}^{1/2}<+\infty,
\end{align}
we have $ (\sin\th^{(0)})_{xx}= \th^{(0)}_{xx} \cos\th^{(0)}
-(\th^{(0)}_x)^2 \sin\th^{(0)}\in L^2(\R)$.  Thus, by
\eqref{H1subsetH1/2} and the fact that $ (d/dx)(-d^2/dx^2)^{1/2} u =
(-d^2/dx^2)^{1/2} u_x$, readily verified via the Fourier
representation of the fractional Laplacian \cite{dinezza12}, we
conclude that $b(x )\in H^1(\R)$ and so by Morrey's Theorem $b(x )$ is
continuous and bounded.  This in turn implies that $\th^{(0)}_{xx}=
b(x) \cos\th^{(0)}\in L^\infty(\R)\cap C(\R)$, that is, $\th^{(0)}\in
C^2(\R)$ is a classical solution of \eqref{1del_b}. Finally,
bootstrapping these regularity arguments, we conclude that $\th^{(0)}
\in C^\infty(\R)$, and, moreover, all the derivatives are bounded
uniformly in $\R$. Together with boundedness of their $L^2$ norms, the
latter implies that all the derivatives of $\th^{(0)}$ vanish at
infinity.

\noindent {\bf Step~3: strict monotonicity.}  First we show that
$\th^{(0)}_x(0)<0$ employing the same argument as in \cite{capella07}.
Since $\th^{(0)}\in C^2(\mathbb R)$ is a classical solution of
\eqref{1del_b}, if we assume that $\th^{(0)}(0)=\tfrac{\pi}{2}$ and
$\th^{(0)}_x(0)=0$, the uniqueness theorem for ordinary differential
equations implies that $\th^{(0)}$ must be identically equal to
$\tfrac{\pi}{2}$, a contradiction.

Now we prove $\th^{(0)}_x<0$ on $\R^+$.  We argue by contradiction and
assume that there exist $\bar x>0$ such that $\th^{(0)}_x(\bar x)=0$.
But then also $\th^{(0)}_{xx}$ must be zero at $\bar x$, because
otherwise $\th^{(0)}$ will be either strictly convex or strictly
concave at $\bar x$, contradicting the fact that $\th^{(0)}_{x}\leq 0$
on $\R$.  Moreover, by differentiating the Euler-Lagrange equation
\eqref{1del} and taking into account that $\th_x(\bar x) =
\th_{xx}(\bar x) = 0$, we get
\begin{equation}\begin{split}
    \th^{(0)}_{xxx}(\bar x) = \frac{\nu}{2}\cos\th^{(0)}(\bar x)
    g(\bar x), \qquad g := \bigg( -\frac{d^2}{d x^2}\bigg)^{1/2}
    \Big(\th^{(0)}_x\cos \th^{(0)}\Big).
  \end{split}\end{equation}
Now we observe that $\cos \th^{(0)}(\bar x)>0$ because
$\th^{(0)}(\R^+)\subset [\theta_h, \tfrac{\pi}{2}]$, and, recalling
\eqref{12lap}, taking into account that $\th_x(\bar x) = \th_{xx}(\bar
x) = 0$, and noticing that the integral converges near $\bar x$ in the
usual sense, we have
\begin{align}
  g(\bar x) & = - \frac{1}{\pi}\int_\R \frac{ \th^{(0)}_x (y)\cos
    \th^{(0)}(y)}{ (\bar x-y)^2}\, dy \notag
  \\
  & = - \frac{1}{\pi} \int_{\R^+} \frac{ \th^{(0)}_x (y)\cos
    \th^{(0)}(y)}{ (\bar x-y)^2}\, dy - \frac{1}{\pi} \int_{\R^+}
  \frac{ \th^{(0)}_x (y)\cos \th^{(0)}(y)}{ (\bar x+y)^2}\, dy 
  >0,
\end{align}
where we have used the fact that $\th^{(0)}_x(-x)=\th^{(0)}_x(x)\leq
0$, and the inequality is strict on a set of positive measure.
Therefore, $ \th^{(0)}_{xxx}(\bar x)>0$, which implies that
$\th^{(0)}$ is increasing at $\bar x$, contradicting the fact that
$\th^{(0)}_{x}\leq 0$ on $\R$.  Hence, $\th^{(0)}$ has to be strictly
decreasing on $\R^+$.
 
Since $\th^{(0)}(x) = \pi -\th^{(0)}(-x)$ for every $x<0$, 
it follows that  $\th^{(0)}_x<0$ also  on $\R^-$.

\smallskip

\noindent {\bf Step~4: uniqueness (up to translations).}  It is clear that in
view of the translational invariance the function
$\th^{(x_0)}(x)=\th^{(0)}(x-x_0)$ with any $x_0\in\R$ still belongs to
$\calA$ and satisfies $\th^{(x_0)}(x_0)=\tfrac{\pi}{2}$.  To conclude,
we have to show that every minimizers of $E$ in $\calA$ is of the form
$\th^{(0)}(x-x_0)$ for some $x_0\in \R$. Our argument is related to
the strict convexity of the integrand in \eqref{1de} written as a
function of $u = \sin \th$ and its derivative noted in
\cite{capella07}.

We employ the strict monotonicity of minimizers, which implies that
for every minimizer there is a unique point at which $\th =
\tfrac{\pi}{2}$. Let $\th^{(1)}$ and $\th^{(2)}$ be two different
minimizers, which, after a suitable translation, satisfy
$\th^{(1)}(0)=\tfrac{\pi}{2}$ and $\th^{(2)}(0)=\tfrac{\pi}{2}$.
Define $\tilde \th(x) := \arcsin \, \Big(\frac{\sin \th^{(1)}(x)
  +\sin\th^{(2)} (x) }{2}\Big)$ for $x \geq 0$ and $\tilde \th(x) :=
\pi - \arcsin \, \Big(\frac{\sin \th^{(1)}(x) +\sin\th^{(2)} (x) }{2}
\Big)$ for $x < 0$ (the function $\sin \tilde\th$ is symmetric
decreasing by Step 1).  We claim that for all $x \not= 0$ we have
\begin{equation}\label{exchange_p} 
\big( \tilde\th_x \big)^2 
=\frac{\big(\th^{(1)}_x \cos \th^{(1)} +\th^{(2)}_x \cos\th^{(2)}\big)^2 }{4
- (\sin \th^{(1)} +\sin\th^{(2)})^2}
\leq \frac{\big( \th^{(1)}_x\big)^2 +\big( \th^{(2)}_x\big)^2}{2}.
\end{equation}
Once the claim is proved, we get that $\tilde\th_x\in L^2(\R)$ and,
hence, $\tilde \th \in \mathcal A$. Moreover, since the anisotropy and
the stray-field terms in the energy are quadratic in $\sin\th$, we get
$E(\tilde\th, \R)<\frac{1}{2}[ E(\th^{(1)}, \R)+E(\th^{(2)}, \R)]$,
which contradicts the minimality of $\th^{(1)}$ and $\th^{(2)}$.

Let us come to the proof of \eqref{exchange_p}.  Observe that by
  two-dimensional Cauchy-Schwarz inequality
  \begin{align}
    \label{exch1}
    \frac{\big(\th^{(1)}_x \cos \th^{(1)} +\th^{(2)}_x
      \cos\th^{(2)}\big)^2 }{4 - (\sin \th^{(1)} +\sin\th^{(2)})^2}
    \leq \frac{ \left( \big(\th^{(1)}_x \big)^2 +\big( \th^{(2)}_x
        \big)^2 \right) \left( \cos^2 \th^{(1)} + \cos^2 \th^{(2)}
      \right) }{4 - (\sin \th^{(1)} +\sin\th^{(2)})^2}.
  \end{align}
  On the other hand, we have
\begin{multline}
  \frac{ 2 \cos^2 \th^{(1)} + 2 \cos^2 \th^{(2)} }{4 - (\sin \th^{(1)}
    +\sin\th^{(2)})^2} = \frac{ 4 - 2 \sin^2 \th^{(1)} - 2 \sin^2
    \th^{(2)} }{4 - (\sin \th^{(1)} +\sin\th^{(2)})^2} \\ \leq \frac{
    4 - \sin^2 \th^{(1)} - \sin^2 \th^{(2)} - 2 \sin \th^{(1)} \sin
    \th^{(2)} }{4 - (\sin \th^{(1)} +\sin\th^{(2)})^2} =
  1. \label{exch2}
\end{multline}
Combing \eqref{exch2} with \eqref{exch1} then yeilds
\eqref{exchange_p}.

\smallskip

\noindent {\bf Step~5: decay.} We claim that by the results of the
previous steps the unique minimizer $\th^{(0)}$ of $E$ in $\calA$
satisfies the assumptions of Lemma \ref{l:udu} with $\alpha =
\tfrac12$. Indeed, $\th^{(0)}$ is a smooth decreasing solution of
\eqref{1del} satisfying \eqref{limcon} with all derivatives vanishing
at infinity and obeying the required symmetry property. Furthermore,
since $u = \sin \th^{(0)} - h \in L^2(\R)$ is symmetric decreasing, by
an elementary property of monotone functions (see, e.g., \cite[Lemma
A.IV]{berestycki83}) we have that $u(x) \leq |x|^{-1/2} \| u
\|_{L^2(\R)}$. Therefore, the conclusions of Lemma \ref{l:udu} apply
to $\th^{(0)}$. We now claim that this, in turn, implies the same kind
of estimates for $\rho(x) - \theta_h$, where $\rho$ is defined by
\eqref{rho} with $\th = \th^{(0)}$. Indeed, since $u_x = \th_x^{(0)}
\cos \th^{(0)}$ and $u_{xx} = \th_{xx}^{(0)} \cos \th^{(0)} -
(\th_x^{(0)})^2 \sin \th^{(0)}$, the estimates for the derivatives
follow from \eqref{eq:ualx}, and to obtain the estimate for $(-d^2 /
dx^2)^{1/2} \rho$, one can use \eqref{eq:D12uu} with $u$ replaced by
$\rho$.

We now rewrite \eqref{1del} in the following form:
\begin{align}
  \label{eq:1del2}
  L (\rho(x) - \theta_h) = f(x), \qquad f(x) := f_1(x) + f_2(x) +
  f_3(x),
\end{align}
where 
\begin{align}
  \label{eq:L}
  L := -{d^2 \over dx^2} + \frac12 \nu \cos^2 \theta_h \left( -{d^2
      \over dx^2} \right)^{1/2} + \cos^2 \theta_h
\end{align}
is a linear operator that can be viewed as a map from $H^2(\R)$ to
$L^2(\R)$, and 
\begin{align}
  f_1(x) & := \cos \theta_h (\cos \theta_h - \cos \rho(x)) (
  \rho(x) -   \theta_h) \notag \\
  & \qquad + \cos \rho(x) ( \cos \theta_h (\rho(x) -
  \theta_h) - \sin   \rho(x) + \sin \theta_h), \\
  f_2(x) & := \frac12 \nu \cos \theta_h \left( -{d^2 \over dx^2}
  \right)^{1/2} ( \cos \theta_h (\rho(x) - \theta_h) - \sin
  \rho(x) + \sin
  \theta_h  ), \\
  f_3(x) & := \frac12 \nu (\cos \theta_h - \cos \rho(x)) \left(
    -{d^2 \over dx^2} \right)^{1/2} (\sin \rho(x) - \sin
  \theta_h).
\end{align}
Note that $L$ represents the operator that generates the linearization
of \eqref{1del} around $\th = \theta_h$, and all the terms in the
definition of $f$ are of ``quadratic order'' in $\rho -
\theta_h$. Also note that the fundamental solution $G(x)$ associated
with $L$, i.e., the solution of $L G(x) = \delta(x)$, where
$\delta(x)$ is the Dirac delta-function, is a positive, continuous,
piecewise-smooth function with a jump of the derivative at the origin
and a decay $G(x) \sim |x|^{-2}$ at infinity (see Lemma
\ref{l:G}). Moreover, $L$ is invertible, and we have
\begin{align}
  \label{eq:Linv}
  \rho - \theta_h = L^{-1} f(x) = \int_\R G(x - y) f(y) dy.  
\end{align}

Observe that since a priori $u(x) \leq c / (1 + |x|^{1/2})$, using
Taylor expansion in $\rho - \theta_h$ we have $|f_1(x)| \leq C / (1 +
|x|^{1/2})^2$, and by Lemma \ref{l:udu} we also have $|f_3(x)| \leq C
/ (1 + |x|^{1/2})^2$. To prove that $|f_2(x)| \leq C / (1 +
|x|^{1/2})^2$ as well, we use the estimate in \eqref{eq:D12uu} once
again, taking into account that $|(\rho \cos \theta_h - \sin
\rho)_{xx} | \leq |\cos \theta_h - \cos \rho| \, |\rho_{xx}| +
|\rho_x|^2 \sin \rho \leq C / (1 + |x|^{1/2})^2$ for $x \not= 0$ and
arguing as in Lemma \ref{l:udu}. Thus, we have $|f(x)| \leq C / (1 +
|x|)$, and hence using \eqref{eq:Linv}, we obtain
\begin{align}
  \label{eq:gimprove}
  \rho(x) - \theta_h \leq \int_\R G(x - y) |f(y)| dy \leq C \int_\R
  {|f(y)| \over 1 + |x - y|^2} dy \leq {C' \over 1 + |x|}.
\end{align}

In view of \eqref{eq:gimprove}, we have now improved the estimate for
$u(x)$ to the one in \eqref{eq:ual} with $\alpha = 1$. Repeating the
argument above, we then conclude that $u(x) \leq C' / (1 +
|x|^2)$. Let us show that this estimate is, in fact, optimal, and
obtain the precise asymptotics of the decay of the profile. In view of
the estimate just mentioned and arguing as in Lemma \ref{l:udu}, we
have $|f(x)| \leq C / (1 + |x|^4)$. Therefore, using the multipole
expansion in \eqref{eq:Linv} and the decay property of $G(x)$ from
Lemma \ref{l:G}, we conclude that
\begin{align}
  \label{eq:multipole}
  \rho - \theta_h = {a \over |x|^2} + o(|x|^{-2}), \qquad a = {\nu
    \over 2 \pi \cos^2 \theta_h} \int_\R f(y) \, dy.
\end{align}
Clearly, $a \geq 0$ in \eqref{eq:multipole}, and to complete the proof
we need to show that $a > 0$. To establish this fact, we first note
that $\int_\R f_1(x) dx > 0$ and $\int_\R f_2(x) dx = 0$. Indeed, it
is easy to see that $f_1(x) > 0$ for all $x \in \R$, and the second
equality follows from the fact that the operator $(-d^2/dx^2)^{1/2}$
is self-adjoint and that constants lie in its kernel. To show that
$\int_\R f_3(x) dx > 0$, we use \eqref{12lap} and symmetrize the
integral to obtain
\begin{align}
  \label{eq:sdc}
  \int_\R f_3(x) \, dx & = -{\nu \over 4 \pi} \int_\R \int_\R { (\sin
    \rho(x) - \sin \rho(y)) (\cos \rho(x) - \cos \rho(y)) \over (x -
    y)^2} \, dx \,
  dy \notag \\
  & = {\nu \over 2 \pi} \int_\R \int_\R { \sin \big( \rho(x) + \rho(y)
    \big) \sin^2 \left( {\rho(x) - \rho(y) \over 2} \right) \over (x -
    y)^2} \, dx \, dy,
\end{align}
where we used trigonometric identities to arrive at the last line. In
view of the fact that $\rho \in (\theta_h, \tfrac12 \pi)$, the
right-hand side of \eqref{eq:sdc} is positive, and the claim follows.

This concludes the proof of the theorem. \hfill \qed

\smallskip

Let us remark that the arguments in the proof of uniqueness above,
with the test function $\th^t(x) := \arcsin (t \sin \th^{(1)}(x) + (1
- t) \sin \th^{(2)}(x))$ for $x \geq 0$ and $\th^t = \pi - \arcsin (t
\sin \th^{(1)}(x) + (1 - t) \sin \th^{(2)}(x))$ for $x < 0$, where $t
\in [0,1]$, could also be used to prove uniqueness of the {\em
  critical points} of the energy $E$ in the class of solutions of
\eqref{1del} with values in $(\th_h, \pi - \th_h)$, obeying
\eqref{limcon} and crossing the value of $\tfrac{\pi}{2}$ only once at
the origin (for a similar treatment see \cite{gm:nhm12}). However,
since the computations in this case become exceedingly tedious and the
precise behavior of the solutions at infinity may be needed, we have
not pursued this question any further in this paper. Nevertheless,
establishing such a uniqueness result would be helpful for
interpreting the results of the numerical solution of \eqref{1del} as
the N\'eel wall profiles. 

It would also be interesting to see if the one-dimensional N\'eel wall
profiles considered in this paper are the only minimizers (or even
critical points) of the two-dimensional thin film energy in \eqref{Em}
with respect to perturbations with compact support that have the
asymptotic behavior given by \eqref{limcon}. We note that in the case
$\nu = 0$ this problem reduces to the famous problem of De Giorgi,
whose solution in two space dimensions is now well understood
\cite{ghoussoub98} (see also \cite{delpino12} for a recent
overview). Whether such a result remains valid in the presence of a
non-local term ($\nu > 0$) remains to be seen (one result in this
direction was obtained in \cite{desimone06}). Let us note that while
for the local problem a continuous family of solutions obtained by
rotations of the one-dimensional profile exists, in the non-local
problem the orientation of the wall is expected to be fixed by the
condition that the net charge across the wall be zero. The latter only
allows walls that are parallel to the easy axis.

\ack 

This work was supported by NSF via grants DMS-0908279 and
DMS-1313687. The authors are grateful to H. Kn\"upfer for valuable
discussions and to the anonymous referee for a suggestion that
simplified the proof of \eqref{exchange_p}.

\begin{appendix}

\section{}
\label{sec:appendix}

\setcounter{theorem}{0}
\renewcommand{\thetheorem}{\Alph{section}.\arabic{theorem}}

The following lemma establishes the basic properties of the
fundamental solution for the operator $L$ (see also
\cite[Sec. 5.1]{garcia04}).

  \begin{lemma}
    \label{l:G}
    Let $G(x)$ be the fundamental solution for the operator $L$
    defined in \eqref{eq:L}. Then
    \begin{align}
      \label{eq:G}
      G(x) = {2 \nu \over \pi} \int_0^\infty {t e^{- t |x| \cos
          \theta_h} \over \nu^2 t^2 \cos^2 \theta_h + 4 (t^2 - 1)^2}
      \, dt.
    \end{align}
    In particular, $G \in C^\infty(\R \backslash \{0\}) \cap
    \mathrm{Lip}(\R) \cap L^\infty(\R) \cap L^1(\R)$, $G > 0$, $G(x) =
    G(-x)$, and
    \begin{align}
      \label{eq:Gdecay}
     G(x) = {\nu \over 2 \pi \cos^2
        \theta_h} |x|^{-2} + O (|x|^{-4}). 
    \end{align}
  \end{lemma}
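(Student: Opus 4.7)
The plan is to derive the explicit integral representation \eqref{eq:G} by Fourier inversion followed by contour deformation, and then read off every other assertion from that formula. Since the Fourier symbol of $L$ is $\sigma(k) := k^2 + A|k| + B$ with $A := \tfrac{1}{2}\nu\cos^2\theta_h$ and $B := \cos^2\theta_h$, and since $\sigma(k) \geq \cos^2\theta_h > 0$, the operator $L$ is invertible on $L^2(\R)$ and $\hat G(k) = 1/\sigma(k)$. Using the evenness of $\sigma$, for $x>0$ I would write
\begin{align*}
G(x) = \frac{1}{2\pi}\int_0^\infty \frac{e^{ikx}+e^{-ikx}}{k^2+Ak+B}\,dk,
\end{align*}
and deform each of the two quarter-line integrals onto the positive, respectively the negative, imaginary half-axis. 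The rational function $1/(z^2+Az+B)$ is holomorphic in the open first and fourth quadrants because both roots of $z^2+Az+B$ have negative real part (real when $A^2 \geq 4B$, complex conjugates otherwise), and the quarter-arc at radius $R$ contributes $O(R^{-1})$ since $|z^2+Az+B|^{-1} = O(R^{-2})$. Parametrizing $z = \pm it$, combining the two contributions via
\begin{align*}
\frac{1}{B-t^2+iAt}-\frac{1}{B-t^2-iAt}=\frac{-2iAt}{(B-t^2)^2+A^2 t^2},
\end{align*}
and finally rescaling $t \mapsto t\cos\theta_h$ produces \eqref{eq:G}. Evenness of $\hat G$ then handles $x<0$.

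Once \eqref{eq:G} is in hand, the pointwise claims are a matter of inspection. The integrand is positive and even in $x$, giving $G>0$ and $G(x)=G(-x)$. It behaves like $t/4$ as $t\to 0$, stays bounded near $t=1$ thanks to the positive term $\nu^2 t^2\cos^2\theta_h$ in the denominator, and decays like $t^{-3}$ as $t\to\infty$; together with the exponential factor this yields absolute convergence of all $x$-derivatives of the integral for $x\neq 0$, hence $G \in C^\infty(\R\setminus\{0\})$, and gives the bound $G \in L^\infty(\R)$. Differentiating under the integral yields $G'(x) = -\mathrm{sgn}(x)\,\Psi(|x|)$ for a bounded continuous function $\Psi$ on $[0,\infty)$, so $G$ has finite one-sided derivatives at $0$ and belongs to $\mathrm{Lip}(\R)$ (though not to $C^1(\R)$). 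Finally, Fubini applied to $\int_\R G(x)\,dx$ together with the explicit $x$-Laplace transform of the integrand produces a convergent $t$-integral, giving $G \in L^1(\R)$ and in particular $\|G\|_{L^1} = \hat G(0) = 1/\cos^2\theta_h$.

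For the decay asymptotic \eqref{eq:Gdecay} I would invoke Watson's lemma. Setting $r := |x|\cos\theta_h$ and expanding
\begin{align*}
g(t) := \frac{t}{\nu^2 t^2 \cos^2\theta_h + 4(t^2-1)^2} = \frac{t}{4} + \alpha t^3 + O(t^5) \qquad \text{as } t \to 0,
\end{align*}
with $\alpha := (8 - \nu^2\cos^2\theta_h)/16$, and noting that the tail $t \geq \tfrac{1}{2}$ contributes at most $O(e^{-r/2})$, we obtain $\int_0^\infty g(t) e^{-tr}\,dt = \tfrac{1}{4} r^{-2} + 6\alpha r^{-4} + O(r^{-6})$, which after multiplication by $2\nu/\pi$ yields the claimed expansion $G(x) = (\nu/(2\pi\cos^2\theta_h))|x|^{-2} + O(|x|^{-4})$. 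The main obstacle is the contour-deformation step, specifically the pole-location argument (which must cover both the real- and complex-root regimes uniformly) and the vanishing of the arc at infinity; once the explicit formula is established, everything else is either elementary inspection of the integrand or a standard Watson expansion.
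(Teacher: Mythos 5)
Your proposal is correct and follows essentially the same route as the paper: Fourier inversion of the symbol $k^2+\tfrac12\nu\cos^2\theta_h|k|+\cos^2\theta_h$, contour deformation onto the imaginary axis (you split into the $k>0$ and $k<0$ half-lines with rational integrands instead of the paper's branch-cut formulation, but the algebra is identical) to obtain the Laplace-type representation \eqref{eq:G}, from which positivity, evenness, regularity, Lipschitz continuity and integrability are read off. The decay \eqref{eq:Gdecay}, which the paper obtains by ``elementary asymptotic analysis,'' you carry out explicitly via Watson's lemma, with the correct leading coefficient $\nu/(2\pi\cos^2\theta_h)$.
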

  
  \begin{proof}
    The proof is a simple application of Fourier transform and contour
    integration techniques, which we present here for
    completeness. Observe first that the Fourier transform $\widehat
    G(k) = \int_\R e^{-i k x} G(x) \, dx$ of $G(x)$ is well-defined
    and is given by
    \begin{eqnarray}
      \label{eq:Gk}
      \widehat G(k) = {1 \over |k|^2 + \tfrac12 \nu \cos^2 \theta_h |k| +
        \cos^2 \theta_h}. 
    \end{eqnarray}
    Interpreting $|k| = \sqrt{k^2}$ as an analytic function of $k$ in
    the complex plane with a branch cut on the imaginary axis, i.e.,
    defining $\sqrt{(x + i y)^2} = |x| + i y \, \mathrm{sgn} \, x$ for
    $x \not= 0$, we can write the formula for inverting the Fourier
    transform of $G$ as
    \begin{align}
      \label{eq:Gx}
      G(x) = {1 \over 2 \pi} \int_\R {e^{i k x} \over k^2 + \tfrac12
        \nu \cos^2 \theta_h \sqrt{k^2} + \cos^2 \theta_h} \, dk,
    \end{align}
    and treat it as an integral along the real axis in the complex
    plain. In particular, $G(x)$ is even, and in the following it
    suffices to consider only $x \geq 0$.

    It is easy to see that with our choice of the analytic branch the
    integrand in \eqref{eq:Gx} has no poles. Furthermore, since the
    integral in \eqref{eq:Gx} over a semicircle of radius $R > 0$ in
    the positive imaginary half-plane vanishes for $x \geq 0$ as $R
    \to \infty$, we can deform the contour of integration to run back
    and forth along the positive imaginary axis. After some algebra,
    we then find that the integral in \eqref{eq:Gx} coincides with
    that in \eqref{eq:G}.

    From the representation in \eqref{eq:G}, one immediately concludes
    that $G(x)$ is positive, bounded, smooth for all $x \not=0$ and
    Lipschitz-continuous at $x = 0$. The estimate in \eqref{eq:Gdecay}
    is then obtained by an elementary asymptotic analysis of the
    integral in \eqref{eq:G}.
  \end{proof}
\end{appendix}






\section*{References}

\bibliographystyle{iopart-num}
\bibliography{biblioicti}

\providecommand{\newblock}{}
\begin{thebibliography}{10}
\expandafter\ifx\csname url\endcsname\relax
  \def\url#1{{\tt #1}}\fi
\expandafter\ifx\csname urlprefix\endcsname\relax\def\urlprefix{URL }\fi
\providecommand{\eprint}[2][]{\url{#2}}

\bibitem{eleftheriou10}
Eleftheriou E, Haas R, Jelitto J, Lantz M and Pozidis H 2010 {\em Bulletin of
  the Technical Committee on Data Engineering\/} {\bf 33} 4--13

\bibitem{hubert}
Hubert A and Sch\"afer R 1998 {\em Magnetic Domains\/} (Berlin: Springer)

\bibitem{moser02}
Moser A, Takano K, Margulies D~T, Albrecht M, Sonobe Y, Ikeda Y, Sun S and
  Fullerton E~E 2002 {\em J. Phys. D: Appl. Phys.\/} {\bf 35} R157--R167

\bibitem{zhu08}
Zhu J~G 2008 {\em Proc. IEEE\/} {\bf 96} 1786 --1798

\bibitem{slaughter09}
Slaughter J 2009 {\em Ann. Rev. Mater. Res.\/} {\bf 39} 277--296

\bibitem{dennis02}
Dennis C~L, Borges R~P, Buda L~D, Ebels U, Gregg J~F, Hehn M, Jouguelet E,
  Ounadjela K, Petej I, Prejbeanu I~L and Thornton M~J 2002 {\em J. Phys. --
  Condensed Matter\/} {\bf 14} R1175--R1262

\bibitem{oepen91}
Oepen H~P 1991 {\em J. Magn. Magn. Mater.\/} {\bf 93} 116--122

\bibitem{allenspach94}
Allenspach R 1994 {\em J. Magn. Magn. Mater.\/} {\bf 129} 160--185

\bibitem{desimone00}
DeSimone A, Kohn R~V, M\"uller S and Otto F 2000 Magnetic microstructures---a
  paradigm of multiscale problems {\em ICIAM 99 (Edinburgh)\/} (Oxford Univ.
  Press) pp 175--190

\bibitem{aharoni}
Aharoni A 2001 {\em Introduction to the Theory of Ferromagnetism\/} 2nd ed
  ({\em International Series of Monographs on Physics\/} vol 109) (New York:
  Oxford University Press)

\bibitem{dietze61}
Dietze H~D and Thomas H 1961 {\em Z. Physik\/} {\bf 163} 523--534

\bibitem{collette64}
Collette R 1964 {\em J. Appl. Phys.\/} {\bf 35} 3294--3301

\bibitem{riedel71}
Riedel R and Seeger A 1971 {\em Phys. Stat. Sol. B\/} {\bf 46} 377--384

\bibitem{garcia99}
Garc\'ia-Cervera C~J 1999 {\em {Magnetic Domains and Magnetic Domain Walls}\/}
  Ph.D. thesis New York University

\bibitem{trunk01}
Trunk T, Redjdal M, K\'akay A, Ruane M~F and Humphrey F~B 2001 {\em J. Appl.
  Phys.\/} {\bf 89} 7606--7608

\bibitem{garcia04}
Gar\'cia-Cervera C~J 2004 {\em Euro. J. Appl. Math.\/} {\bf 15} 451--486

\bibitem{mo:jcp06}
Muratov C~B and Osipov V~V 2006 {\em J. Comp. Phys.\/} {\bf 216} 637--653

\bibitem{berger92}
Berger A and Oepen H~P 1992 {\em Phys. Rev. B\/} {\bf 45} 12596--12599

\bibitem{wong96}
Wong B~Y and Laughlin D~E 1996 {\em J. Appl. Phys.\/} {\bf 79} 6455--6457

\bibitem{jubert04}
Jubert P~O, Allenspach R and Bischof A 2004 {\em Phys. Rev. B\/} {\bf 69}
  220410

\bibitem{aharoni66}
Aharoni A 1966 {\em J. Appl. Phys.\/} {\bf 37} 3271--3279

\bibitem{melcher03}
Melcher C 2003 {\em Arch. Rat. Mech. Anal.\/} {\bf 168} 83--113

\bibitem{capella07}
Capella A, Melcher C and Otto F 2007 {\em Nonlinearity\/} {\bf 20} 2519---2537

\bibitem{desimone06}
DeSimone A, Kn\"upfer H and Otto F 2006 {\em Calc. Var. PDE\/} {\bf 27}
  233--253

\bibitem{ignat08}
Ignat R and Otto F 2008 {\em J. Eur. Math. Soc.\/} {\bf 10} 909--956

\bibitem{ignat09}
Ignat R 2009 {\em Calc. Var. PDE\/} {\bf 36} 285--316

\bibitem{melcher10}
Melcher C 2010 {\em SIAM J. Math. Anal.\/} {\bf 42} 519--537

\bibitem{landau35}
Landau L~D and Lifshitz E~M 1935 {\em Phys. Z. Sowjetunion\/} {\bf 8} 153--169

\bibitem{landau8}
Landau L~D and Lifshits E~M 1984 {\em Course of Theoretical Physics\/} vol~8
  (London: Pergamon Press)

\bibitem{heinrich93}
Heinrich B and Cochran J~F 1993 {\em Adv. Phys.\/} {\bf 42} 523--639

\bibitem{mo:jap08}
Muratov C~B and Osipov V~V 2008 {\em J. Appl. Phys.\/} {\bf 104} 053908 pp.
  1--14

\bibitem{dinezza12}
Di~Nezza E, Palatucci G and Valdinoci E 2012 {\em Bull. Sci. Math.\/} {\bf 136}
  521--573

\bibitem{Leoni_book}
Leoni G 2009 {\em A First Course in Sobolev Spaces\/} (Providence, RI: American
  Mathematical Society)

\bibitem{lieb-loss}
Lieb E~H and Loss M 2010 {\em Analysis\/} (Providence, RI: American
  Mathematical Society)

\bibitem{berestycki83}
Berestycki H and Lions P~L 1983 {\em Arch. Rational Mech. Anal.\/} {\bf 82}
  313--345

\bibitem{gm:nhm12}
Gordon P~V and Muratov C~B 2012 {\em Netw. Heterog. Media\/} {\bf 7} 767--780

\bibitem{ghoussoub98}
Ghoussoub N and Gui C 1998 {\em Math. Ann.\/} {\bf 311} 481--491

\bibitem{delpino12}
Del~Pino M, Kowalczyk M and Wei J~C 2012 {\em Proc. Natl. Acad. Sci. USA\/}
  {\bf 109} 6845--6850

\end{thebibliography}

\end{document}